\newcommand{\ssup}[1] {{\scriptscriptstyle{({#1}})}}
\newcommand{\sfrac}[2] {\mbox{$\frac{#1}{#2}$}}
\renewcommand{\c}{\mathcal}
\renewcommand{\b}{\mathbb}
\font\dsrom=dsrom10 scaled 1200
\def \indi{\textrm{\dsrom{1}}}
\newcommand{\Var}{\mathtt{Var}}
\newcommand{\eps}{\varepsilon}
\newcommand{\1}{1\hspace{-0.098cm}\mathrm{l}}
\newcommand{\N}{\mathbb N}
\renewcommand{\P}{\mathbb P}
\newcommand{\E}{\mathbb E}
\newcommand{\cF}{\mathscr F}
\newtheorem{remark}{Remark}
\declaretheorem[thmbox=S, numberwithin=section, name=Theorem]{thm}
\newtheorem{prop}[thm]{Proposition}
\newtheorem{lem}[thm]{Lemma}
\newtheorem{cj}[thm]{Conjecture}
\newtheorem{cor}[thm]{Corollary}
\renewenvironment{proof}[1][Proof] {\par\pushQED{\qed}\normalfont\topsep6\p@\@plus6\p@\relax\trivlist\item[\hskip\labelsep\bfseries#1\@addpunct{.}]\ignorespaces}{\popQED\endtrivlist\@endpefalse} 
\title{Non-extensive condensation in reinforced branching processes}
\author{Steffen Dereich, C{\'e}cile Mailler and Peter M{\"o}rters}
\date{}
\begin{document}

\thispagestyle{empty}
\maketitle

\begin{abstract}
\noindent 
We study a class of branching processes in which a population consists of immortal
individuals equipped with a fitness value. Individuals produce offspring with a rate given by their fitness,
and offspring may either belong to the same family, sharing the fitness of their parent, or be founders of new families, with a fitness sampled from a fitness distribution~$\mu$.  Examples that can be embedded in this class are stochastic house-of-cards models, urn models with reinforcement, and the preferential attachment tree of Bianconi and Barab\'asi. Our focus is on the case when the fitness distribution $\mu$ has bounded support and regularly varying tail 
at the essential supremum. In this case there exists a condensation phase, in which asymptotically a proportion of mass  in the empirical fitness distribution of the overall population condenses in the maximal fitness value.  Our main results describe the asymptotic behaviour of the size and fitness of the largest family at a given time. In particular, we show that as time goes to infinity the size of the largest family is always negligible compared to the overall population size. This implies that condensation, when it arises, is non-extensive and emerges as a collective effort of several families none of which  can create a condensate on its own.  Our result disproves claims made in the physics literature in the context of preferential attachment trees.
\end{abstract}

{\bf Keywords:}  Network; preferential attachment; Bianconi-Barab\'asi model; genetics; house-of-cards model; selection and mutation; urn model; reinforcement; non-Malthusian branching; Crump-Mode-Jagers process; condensation.\\
{\bf MSC Classification:} Primary 60J80; Secondary 05C82; 60G70. \\

\tableofcontents

\newpage

\section{Background and motivation}

The principal aim of this paper is to study the emergence of a condensate in stochastic models. For this purpose we consider a class of branching processes with reinforcement, which  probably constitute the  easiest class of models, in which this question can be studied in a meaningful way. Still we shall see that, due to the reinforcement, these models display rather complex behaviour and not all relevant questions on their behaviour will be answered.
\smallskip

Although our models can describe a variety of objects, see the examples below, we shall describe them as a structured population. 
Parameters of our model are a fitness distribution~$\mu$ on the positive reals,  and positive numbers $\beta, \gamma\leq1$ with $\beta+\gamma\geq 1$.
At any time~$t$ the population consists of a finite number~$N(t)$ of individuals. 
Each individual in the population has a fitness, and
individuals are organised into families, 
such that all members of a family have the same fitness. 
%
The process is started with one family of one individual, whose fitness is drawn from the distribution~$\mu$. 
Suppose, at time $t\geq 0$, the population consists of $M(t)$ families, 
and there are $Z_n(t)$ individuals of fitness $F_n$ in the $n$th family, for $1\leq n \leq M(t)$. 
Independently in every family birth events occur with a time-dependent rate~$F_n Z_n(t)$. 
When a birth event occurs in the $n$th family, independently of everything else, 
one or both of the following happen,
\begin{itemize}
\item with probability $\beta$ a new family is founded, 
initially consisting of one individual equipped with a fitness drawn,
independently of everything else,  from the distribution~$\mu$;
\item with probability $\gamma$ a new individual with fitness~$F_n$ is added to the $n$th family. The ability of the system to reproduce particles of the same type constitutes the reinforcement, see~\cite{Pemantle}.
\end{itemize}
Note that both things happen simultaneously with probability $\beta+\gamma-1\geq0$. 
If $\mu$ has all exponential moments the total number~$N(t)$ of individuals in the population remains finite at all times, see for example Corollary~3.3 in~\cite{Komjathy}.  
Our main object of interest is the \emph{empirical fitness distribution} at time~$t$, which is defined as
\begin{equation}\label{empfit}
\Xi_t=\frac1{N(t)} \sum_{n=1}^{M(t)} Z_n(t)\,\delta_{F_n}.
\end{equation}

In this paper we focus on bounded fitness distributions~$\mu$, and specifically the case in which a condensation phenomenon occurs, which we describe in some detail.  Different phenomena occur in the case of unbounded fitness distributions and these will be investigated in a companion paper~\cite{DMM16}. From now on we assume that  $\mu$ is a probability measure supported by a bounded subinterval of the positive reals. Without loss of generality we assume 
that $\mu$ has essential supremum equal to one. To avoid degeneracies we also assume that $\mu$ has no atom at one.
\medskip

We now describe our three main examples motivating our work.

{\bf Example~1:} \emph{Branching process with selection and mutation.}

This model is a stochastic \emph{house-of-cards model} in a similar vein as Kingman's model 
(which is deterministic and much easier to analyse, see~\cite{Kingman, DM13}). 
We start with a single individual with a genetic fitness chosen according to~$\mu$. 
Individuals never die and give birth to new individuals with a rate equal to their genetic fitness, 
the different reproduction rates causing the selection effect. 
When a new individual is born it is a mutant with probability~$\beta$, 
in which case it gets a fitness drawn independently of everything else 
from~$\mu$. If the new individual is not a mutant, it inherits the fitness of its parent. 
The model corresponds to the parameter choice $\gamma=1-\beta$ in our framework. 
Observe that a mutation causes the complete loss of genetic information in the 
affected individual's ancestry, pictorially speaking `the genetic house of cards collapses'. 
This is why the term {house-of-cards model} is used for this process, see~\cite{Hodgkins}
for a discussion of the relevance of these models in the theory of evolution.
\smallskip

The number of families $M(t)$ corresponds to the number of mutants in the population at time~$t$.
We can describe the process $(M(t))_{t>0}$ as a Crump-Mode-Jagers process, using the framework of~\cite{Nerman}. A mutant $x$ born at time $\tau$ with fitness~$f$ produces new mutants at ages according to a random point process~$\xi$. This process is a Cox process, i.e.\ a Poisson process with a random intensity measure~$\beta f \phi_x(s) \, ds$. The function~$\phi_x(s)$ is given by the size 
at time $\tau+s$
of the family founded by the mutant and is therefore a Yule processes with intensity $(1-\beta)f$. 

The key assumption for the convergence theory of Crump-Mode-Jagers processes is the existence of a \emph{Malthusian parameter}, i.e.\ an $\alpha>0$ such that
$$1=\int_0^\infty e^{-\alpha s}  \, {\mathbb E} \xi(ds).$$
In our case we have, for $\alpha\geq1-\beta$, that
$$\int_0^\infty e^{-\alpha s}  \, {\mathbb E} \xi(ds)=
{\mathbb E} \int_0^\infty e^{-\alpha s} \beta f \phi_x(s) \, ds
=\beta \int f \int_0^\infty  e^{-\alpha s+(1-\beta)f s}  \, ds \, \mu(df)
=\beta \int \frac{f}{\alpha -(1-\beta)f} \, \mu(df) .$$
Hence a Malthusian parameter exists if and only if \smash{$\frac\beta{1-\beta} \int \frac{f}{1-f} \, \mu(df)\geq 1$. }
If this condition fails, the classical convergence theory of Crump-Mode-Jagers processes fails and very little is known about this case. 
In particular, in our model the precise asymptotics of $M(t)$ is unknown. We show that in this case a phenomenon of condensation occurs, which loosely 
speaking means  that a positive proportion of individuals have fitnesses converging to the maximal possible value. Key questions motivating this project are: \emph{How fast is this convergence, when did the mutations arise that form the condensate,
and how many mutations contribute to the condensate?} 

\pagebreak[3]
\bigskip
 
{\bf Example~2:} \emph{Preferential attachment tree of Bianconi and Barab\'asi.}

This model is originally a discrete time network model. 
Putting it into our framework means embedding it into continuous time, a technique heavily advocated by Janson~\cite{Ja04}, 
who attributes the method to Athreya and Karlin~\cite{AK68}, and by Bhamidi~\cite{Bh07}.
The network is constructed successively, starting with one vertex which is formally given degree one. 
The vertex is given a fitness, randomly chosen according to~$\mu$. 
At every time step a new vertex is introduced, equipped with a fitness, randomly chosen according to~$\mu$, 
and linked to one of the existing vertices. 
The probability of an existing vertex being  chosen is proportional to the product of 
its fitness and its degree at the time when the new vertex is introduced. 
As new vertices prefer to attach to existing vertices of high degree and high fitness, 
this is called a \emph{preferential attachment} model. 

In our representation we choose $\beta=\gamma=1$ and observe the system at the birth times of 
individuals.  We think of every family as a vertex in the network, and of the size of a family as its degree. 
Note that when the $n$th birth event takes place, 
it arises in each of the existing families with a probability 
proportional to the product of its fitness and its degree. 
At the birth event a new family is founded, i.e.\ a new vertex is introduced, 
and simultaneously the family that has given birth is increased in size by one, 
meaning that the degree of the corresponding vertex is incremented by one. 
Our representation only keeps track of the vertices and their degrees, 
not of the actual edges. 
But this does not matter as the main object of interest for us is 
the long-term behaviour of the degree-weighted fitness distribution, 
which coincides with the empirical fitness distribution in our framework.

This model was analysed by Borgs et al.~\cite{BC07} who proved the existence of a \emph{innovation-pays-off phase} 
in which a proportion of the mass in the degree-weighted fitness distribution condenses in the maximal fitness. 
This behaviour was already predicted in~\cite{BB01} who called this phase the winner-takes-all phase, 
a heavily misleading name as we shall see below. 
The result is reproved in our  Theorem~\ref{th:cond}. 
Borgs et al.~\cite{BC07} state as an open problem 
\emph{`to give an exact quantitative description of the innovation-pays-off phase. [...] 
How are the links distributed among the highest fitnesses present in the system at any given time? 
At what rate are new nodes with higher fitness taking over?'} 
Our main aim here is to make progress on this problem.
\bigskip

{\bf Example 3:} \emph{Generalised P{\'o}lya urns.}

A class of generalised P{\'o}lya urns also falls into our framework, with general parameters $\beta, \gamma>0$ and $\mu$ as above. It can be described as an urn containing balls of different colours. Every colour has a given \emph{activity} chosen independently according to $\mu$. At time zero, the urn contains one ball of colour $1$. At every time step, a ball is drawn at random from the urn with probability proportional to its activity.
Then, the drawn ball is put back into the urn together with one or two new balls, at most one ball of the same and one of a new colour. 
A ball with the same colour is chosen with probability $\gamma$,  and a ball of a new colour with probability $\beta$. New colours are chosen independently according to $\mu$. To embed the urn model into our framework we again look at the times of birth events. Observe that $\Xi_t$ is now the empirical distribution of activities in the urn at time $t$.
\pagebreak[3]

Such generalised P{\'o}lya urns have apparently not been studied so far in full generality. 
Janson~\cite{Ja04} is looking at the case where $\mu$ is finitely supported, in which the  
condensation phenomenon, which is of interest to us, cannot arise. A related model has been studied by Chung et al.~\cite{CHJ03} 
who draw balls  depending in a non-linear way on the distribution of colours in the urn, and by 
Collevecchio et al.~\cite{CCL13} who allow for a time-dependent replacement rule. Their main focus 
is on the question whether there can be an unbounded number of balls of more than one colour,
and if not which colour eventually dominates. In our setup all colours will have an unbounded number 
of balls and we show that \emph{the asymptotic proportion of balls of any colour goes to zero}
uniformly as time goes to infinity.
\bigskip

\section{Statement of the results}
The reinforced branching process is described by the following family of random variables. 
We denote by
\begin{itemize}
\item $N(t)$ the total size of the population at time $t$,
\item $M(t)$ the number of different families at time $t$, 
\item $\sigma_n$ the time of the $n^{\text{th}}$ birth event,
\item $\tau_n$ the time of the foundation of the $n^{\text{th}}$ family,
\item $Z_n(t)$ the size of the $n^{\text{th}}$ family at time $t$ (if $n>M(t)$ we set $Z_n(t)=0$), and
\item $F_n$ the fitness of the $n^{\text{th}}$ family.
\end{itemize}

We are first interested in the empirical fitness distribution $\Xi_t$ at time $t$, defined in \eqref{empfit}.
The asymptotic behaviour of this empirical distribution shows a phase transition 
between a fluid phase and a condensation phase.
The condition for condensation is
\begin{equation}\label{cond}\tag{Cond}
\frac{\beta}{\beta+\gamma} \int_0^1 \frac{1}{1-x} \, d\mu(x)  <  1
\quad\mbox{ or, equivalently, }\quad \frac{\beta}{\gamma}\int_0^1 \frac{x}{1-x}  \, d\mu(x)<  1,
\end{equation}
as stated in the following theorem. 
\bigskip

\begin{thm}[Existence of a condensation phase]\label{th:cond}
If \eqref{cond} fails, then there exists a unique 
$\lambda^{\star}\in[\gamma,\beta + \gamma)$ such that
\[\frac{\beta}{ \beta+\gamma} \int_0^1 \frac{\lambda^{\star}}{\lambda^{\star}-\gamma x} \, d\mu(x) =1,\]
otherwise let $\lambda^{\star}:=\gamma$. In both cases
\begin{itemize}
\item the empirical mean fitness 
$\int_0^1 x \, \Xi_t(dx)$ converges almost surely to $\nicefrac{\lambda^{\star}}{\beta+\gamma}$,
\item  and there exists a probability measure $\pi$ such that, almost
surely, the empirical fitness distribution $\Xi_t$ converges 
weakly to $\pi$.
\end{itemize}
The limit measure~$\pi$ of the empirical fitness distribution is given
\begin{itemize}
\item[(a)] if (\ref{cond}) fails
by $$ d\pi(x)= \frac{\beta}{\beta+\gamma} \frac{\lambda^\star}{\lambda^{\star}-\gamma x} d\mu(x).$$
\end{itemize}
\begin{itemize}
\item[(b)] if (\ref{cond}) is true by
$$d\pi(x)={\frac{\beta}{\beta + \gamma}} \frac{1}{1- x} d\mu(x) + \omega(\beta,\gamma) \delta_1(dx),$$
where
$$\omega(\beta,\gamma):=1 -\frac{\beta}{\beta+\gamma}\int_0^1 \frac{1}{1-x} d\mu(x)>0.$$
\end{itemize}
\end{thm}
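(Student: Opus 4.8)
The natural strategy is to embed the family-size process into the theory of general branching processes (Crump–Mode–Jagers processes) counted with a random characteristic, exactly as the excerpt sets up for Example~1. The key observation is that, after passing to the continuous-time formulation, the number of families $M(t)$ together with their sizes is a CMJ process: a family founded at time $\tau$ with fitness $f$ produces new families at rate $\beta f Z(s)$, where $Z(s)$ is its own size, and $Z$ is itself a Yule-type process (a pure birth process) that jumps at rate $(\beta+\gamma)fZ$ — or, restricting attention to same-type births, at rate $\gamma f Z$. One first computes the reproduction kernel $\mu_f(ds) = \mathbb{E}\,\xi(ds)$ for a family of fitness $f$ and checks when the Malthusian equation $\int_0^\infty e^{-\lambda s}\,\mu_f(ds) = 1$ has a solution after integrating over $f\sim\mu$. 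A direct computation with the Yule process gives $\mathbb{E} Z(s) = e^{\gamma f s}$ (with $Z(0)=1$), so $\int_0^\infty e^{-\lambda s}\beta f\,\mathbb{E}Z(s)\,ds = \beta f/(\lambda-\gamma f)$ for $\lambda>\gamma f$, and averaging gives precisely the equation $\frac{\beta}{\beta+\gamma}\int_0^1 \frac{\lambda}{\lambda-\gamma x}\,d\mu(x)=1$ after the rescaling $\lambda\mapsto\lambda/(\beta+\gamma)$. Monotonicity and continuity of $\lambda\mapsto\int_0^1\frac{\lambda}{\lambda-\gamma x}\,d\mu(x)$ on $(\gamma,\infty)$, together with evaluating the limits at $\lambda\downarrow(\beta+\gamma)$ hand at $\lambda\to\infty$ (where it tends to $1$), pins down $\lambda^\star$ and shows it lies in $[\gamma,\beta+\gamma)$ exactly when \eqref{cond} fails.

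**Main line of argument.** With the Malthusian parameter in hand in the supercritical/critical-Malthusian regime (failure of \eqref{cond}), one applies Nerman's almost-sure convergence theorem for CMJ processes counted with characteristics. Choosing the characteristic $\chi(s)=Z(s)$ (the family size) gives the total population $N(t)$; choosing $\chi(s)=Z(s)\indi_{\{F\le x\}}$-type characteristics (more precisely, integrating the characteristic against the fitness mark) recovers the empirical fitness distribution $\Xi_t$. Nerman's theorem yields $e^{-\lambda^\star t}$ (times the population counted with characteristic $\chi$) $\to W\cdot\frac{\int_0^\infty e^{-\lambda^\star s}\mathbb{E}\chi(s)\,ds}{\text{(derivative term)}}$ almost surely, where $W>0$ is the limiting martingale; the random factor $W$ cancels in the ratio $\Xi_t = (\text{population counted with }Z\indi_{F\in\cdot})/(\text{population counted with }Z)$. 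Computing the two integrals: the numerator against a test set $A$ is $\int_A \frac{x}{\lambda^\star-\gamma x}\,d\mu(x)$ (up to the same constants) and the denominator is $\int_0^1\frac{x}{\lambda^\star-\gamma x}\,d\mu(x)$; after the change of variables and using the defining equation for $\lambda^\star$ one reads off $d\pi(x)=\frac{\beta}{\beta+\gamma}\frac{\lambda^\star}{\lambda^\star-\gamma x}\,d\mu(x)$, which also shows $\pi$ is a probability measure. The empirical mean fitness is then $\int_0^1 x\,d\pi(x)$, and a short manipulation of the defining identity gives the value $\lambda^\star/(\beta+\gamma)$.

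**The condensation case.** When \eqref{cond} holds, the Malthusian equation has no root in $[\gamma,\beta+\gamma)$ and the CMJ process is non-Malthusian (the reproduction measure has infinite exponential moments only beyond the critical rate, or the Malthusian parameter would need to be $\le\gamma$). Here Nerman's theorem does not apply directly; instead one takes $\lambda^\star=\gamma$, the "boundary" rate, and works by a separate argument: show that $e^{-\gamma t}N(t)$ still converges (e.g.\ via a martingale associated to the total mass weighted correctly, or via the results of~\cite{Nerman} in the boundary case under the regular-variation hypothesis on $\mu$ near $1$), and that for any fixed $\eps>0$ the contribution to $\Xi_t$ from fitnesses in $[0,1-\eps]$ converges to $\frac{\beta}{\beta+\gamma}\int_0^{1-\eps}\frac{1}{1-x}\,d\mu(x)$ — note $\lambda^\star/(\lambda^\star-\gamma x)=\gamma/(\gamma-\gamma x)=1/(1-x)$. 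The leftover mass, by conservation of total mass (the tightness/convergence of $\Xi_t$ to a probability measure), must accumulate at the point $1$, giving the atom of size $\omega(\beta,\gamma)$. Positivity of $\omega$ is immediate from \eqref{cond}.

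**Expected obstacle.** The delicate point is the condensation regime: proving that $\Xi_t$ genuinely converges (not merely is tight) and that no mass escapes to values strictly below $1$ other than via the explicit absolutely continuous part. This requires control of the non-Malthusian CMJ process near its critical rate — showing $e^{-\gamma t}N(t)$ has an almost-sure limit, and that the families with fitness near $1$ collectively carry exactly the condensate mass without any single family dominating. The regular-variation assumption on the tail of $\mu$ at $1$ is what makes this tractable, and the heart of the paper (the later theorems on the largest family) presumably refines exactly this analysis; for Theorem~\ref{th:cond} itself one needs only the qualitative statement, but the convergence of the renormalised population in the boundary case is still the crux and likely relies on a carefully chosen supermartingale or on a truncation-and-coupling comparison with the Malthusian case at rate $\gamma(1-\eps)^{-1}\cdot(\ldots)$.
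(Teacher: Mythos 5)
Your treatment of the non-condensation case matches the paper: one computes the reproduction kernel, identifies the Malthusian equation (which is directly equivalent to the stated one — no rescaling $\lambda\mapsto\lambda/(\beta+\gamma)$ is needed; the two forms in \eqref{cond} are algebraically identical), and applies Nerman's theorem with the characteristics $\phi(t)=Y(t)$ and $\phi(t)=Y(t)\1\{F\geq 1-x\}$, the random factor $W$ cancelling in the ratio. That part is fine, modulo the routine monotonicity check that places $\lambda^\star$ in $[\gamma,\beta+\gamma)$.

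The condensation case is where your primary proposed route fails. You suggest showing that $e^{-\gamma t}N(t)$ "still converges" and then normalising by it. But in the condensation phase this quantity does not converge to a positive limit: the paper explicitly leaves the precise growth of $N(t)$ open (Remark~1 and the first problem of Section~8), and the conjectured asymptotics $\log N(t)=\lambda^\star t-\alpha\log t+o(\log t)$ would give $e^{-\gamma t}N(t)\approx t^{-\alpha}\to 0$. So there is no martingale or supermartingale normalisation of $N(t)$ available at rate $\gamma$, and an argument built on it cannot be completed. The paper's actual device — which you mention only in your final clause, undeveloped — is a monotone coupling on the level of the empirical fitness distribution itself, bypassing $N(t)$ entirely: one couples the process with the modified models whose fitness laws are $\mu_\varepsilon=\1[0,1-\varepsilon)\mu+\mu[1-\varepsilon,1]\delta_1$ (mass near the top pushed up to $1$, which forces $\int\frac{d\mu_\varepsilon(x)}{1-x}=\infty$ and hence restores a Malthusian parameter $\lambda_\varepsilon>\gamma$) and $\mu^{\ssup\varepsilon}=\1[0,1-\varepsilon)\mu+\mu[1-\varepsilon,1]\delta_{1-\varepsilon}$ (mass pushed down). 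The couplings sandwich $\Xi_t([a,b])$ for $[a,b]\subset[0,1-\varepsilon)$ between quantities to which Nerman's theorem applies, and letting $\varepsilon\downarrow 0$ (with $\lambda_\varepsilon\downarrow\gamma$) identifies the absolutely continuous part $\frac{\beta}{\beta+\gamma}\frac{1}{1-x}d\mu(x)$; the missing mass $\omega(\beta,\gamma)$ must then sit at $1$. The same coupling also covers the boundary case $\lambda^\star=\gamma$ within the "\eqref{cond} fails" branch, where Nerman's integrability hypotheses for the characteristic $Y(t)$ break down — a case your proposal does not separate out. Without this coupling (or an equivalent sandwich), the convergence of $\Xi_t$ in the condensation regime is not established by your argument.
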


\medskip
\pagebreak[3]

\begin{remark}
{\rm It is easy to see from the law of large numbers that
$$\frac{M(t)}{N(t)} \longrightarrow \frac{\beta}{\beta+\gamma} \qquad \mbox{ almost surely.}$$
Hence it is equivalent to ask for the absolute growth of either of the processes $(M(t)\colon t>0)$ or $(N(t)\colon t>0)$.  Given the population at time~$\sigma_n$ of the $n$th birth event, the waiting time $\sigma_{n+1}-\sigma_n$ until the next individual is born is exponentially distributed with rate 
\smash{$N(\sigma_n) \int x \, \Xi_{\sigma_n}(dx)\sim n \lambda^\star$}, where we have used that
$N(\sigma_n) \sim (\beta+\gamma)n$ by the law of large numbers. Hence
\smash{$\sigma_n \sim \frac1{\lambda^\star} \log n$}
and, in particular, we obtain, almost surely, 
$$\lim_{t\uparrow\infty} \frac1t \log N(t) = \lambda^\star.$$
If there is no condensation we can improve this to convergence of $e^{-t \lambda^\star} N(t)$  to a positive random variable, using the arguments sketched in Section~3.2 below.
But fine results about  the growth of the population in the condensation phase are hard to obtain, see also Section~8 .} 
\end{remark}
\medskip

\begin{remark}
{\rm We denote the part of the limit mass~$\pi$ which is absolutely continuous with respect to $\mu$ as \emph{bulk} and the part concentrated in the maximal fitness as \emph{condensate}.  The theorem shows 
that in the condensation phase, i.e.\ if \eqref{cond} holds, we are seeing a phenomenon of \emph{self-organised criticality}, 
as the number of individuals in the bulk and in the condensate are always kept on the same order of magnitude, without any tuning of parameters. In Dereich~\cite{De14} one can see that for a model without self-organisation it can be rather complicated to tune the parameters in such a way that one has coexistence of bulk and condensate.} 
\end{remark}
\medskip

Our interest in this paper lies in the \emph{emergence} of the condensate, i.e.\ how the condensate manifests itself at large finite times. Following the discussion of Bose-Einstein condensation in van den Berg et al.~\cite{BLP86} two alternative scenarios are possible:
\begin{itemize}
\item For the largest family, the proportion of individuals belonging to this family in the overall population at time~$t$  is asymptotically positive. This phenomenon of \emph{macroscopic occupancy} arises in condensation of the free Bose gas below a critical temperature, see~\cite{BLP86}.
\item No individual family makes an asymptotically  positive contribution. Instead, it is a collective effort of a growing number of families to form the condensate. This phenomenon is called   \emph{non-extensive condensation}. 
van den Berg et al.~\cite{BLP86}~have shown that this occurs in the free Bose gas for an intermediate temperature range.
\end{itemize}
\smallskip

We shall see in Theorem~\ref{nowinner} that in our model under a natural assumption on $\mu$ the second scenario prevails. To show this 
we need to investigate the behaviour of the largest family in our system. This requires some regularity assumptions on~$\mu$.  We henceforth assume that the fitness distribution $\mu$ has a regularly varying tail in one, meaning that there exists $\alpha>1$ and a slowly varying function $\ell$ with
\begin{align}\label{as}\tag{RV}
\mu(1-\eps,1)=\eps^\alpha \ell(\eps).
\end{align}
This corresponds to the most common type of behaviour of $\mu$ at its tip that allows a condensation phase.
\smallskip

\begin{figure}[ht]
\begin{center}
\includegraphics[width=.7\textwidth]{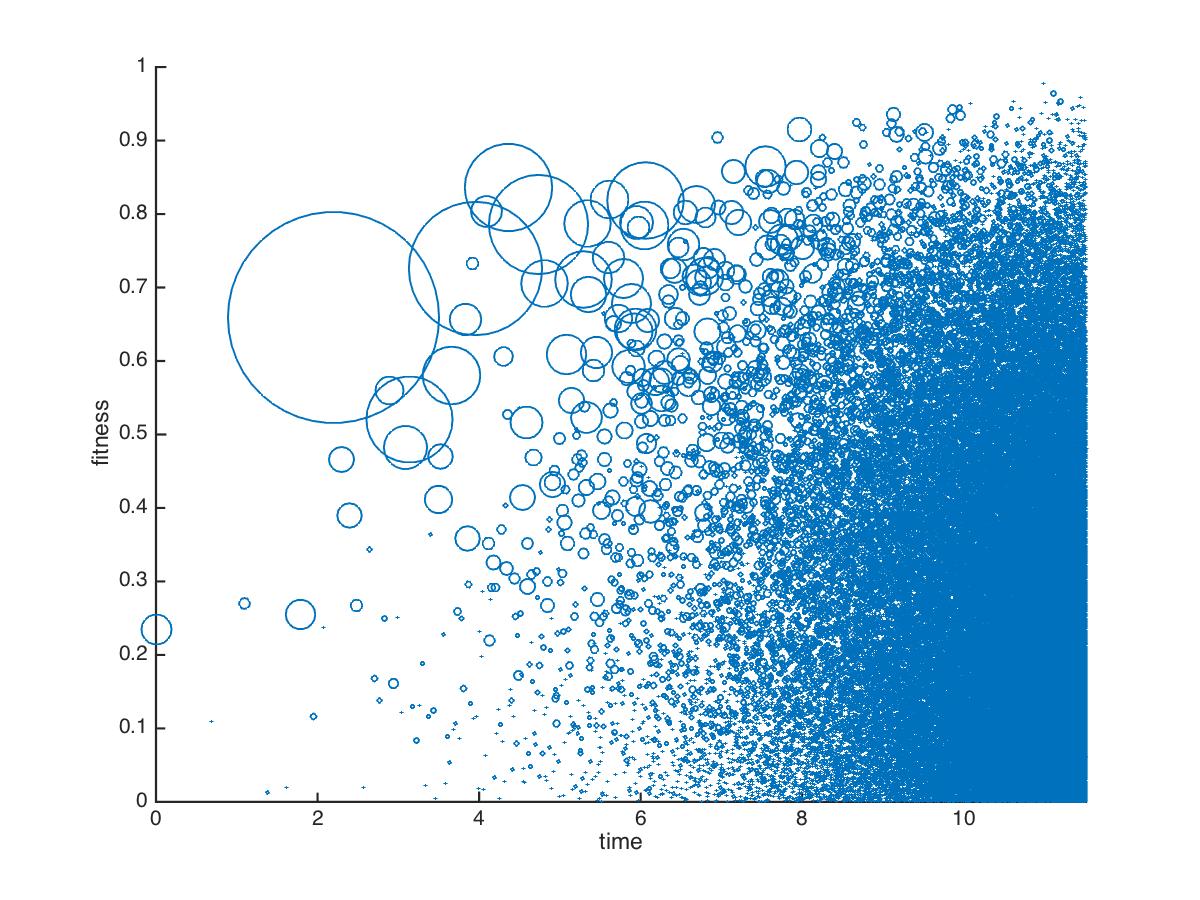}
\end{center}
\caption{A simulation of a reinforced branching process in the condensation case.  
Parameters are $\mu(dx)=3(1-x)^2\, dx$ and $\beta=\gamma=1$.
Each family is represented by a circle with area proportional to\\ its size at time $t=12$ and centred at its time of birth (horizontal axis) and its fitness (vertical axis).\\  Simulation courtesy of Anna Senkevich.} 
\label{fig:bubbles}
\end{figure}

We start with a heuristic consideration. Suppose $t>0$ is given. At any time $s\in(0,t)$ there are $\exp((\lambda^*+o(1))s)$ families in the system and by an extreme value calculation the largest fitness in this number of families  is of order $1-\exp(-(\frac{\lambda^*}{\alpha}+o(1))s)$. Until time~$t>s$ the family achieving this fitness has time $t-s$ to grow and therefore has size of order
\smash{$\exp(\gamma (t-s) (1-e^{-\frac{\lambda^*}{\alpha}s}))$}. We 
therefore expect the birth time $s$ of the maximal family at time $t$ to be around the maximiser of  this expression over $0<s<t.$ This maximum occurs roughly at time 
$s \sim \nicefrac\alpha{\lambda^\star} \log t$. 

For the rigorous results we replace this time~$s$ by the stopping time 
\[T(t):= \inf\bigl\{s\geq 0\colon M(s)\geq n(t)\bigr\} \quad \text{where}\quad  n(t):=\big\lceil\sfrac 1{\mu(1-t^{-1},1)}\big\rceil,\]
which 
allows us precise control over the number of families in the system. Note that
$T(t)\sim\nicefrac\alpha{\lambda^\star} \log t$, as can be seen  by putting 
$M(s)=\exp((\lambda^*+o(1))s)$ and $\log n(t)=(\alpha+o(1)) \log t$.
Our heuristics suggests that
the dominant families of the population at time $t$ are born in a window around time $T(t)$,
have fitness $F_n$ with $1-F_n$ of order~$1/t$, and  size of order  $\exp(\gamma (t-T(t)))$. 
\medskip
\pagebreak[3]

To confirm
this intuition we zoom into this window by considering the point process
\[\Gamma_t=\sum_{n=1}^{M(t)} \delta{\big(\tau_n-T(t), (t-\tau_n)(1-F_n),e^{-\gamma (t-T(t))}Z_n(t)\big)},\]
where $\delta(x)$ is the Dirac mass in~$x$.
\medskip


\begin{thm}[Poisson limit]\label{poisson}%
Under assumption (\ref{as})  the point process
$(\Gamma_t)_{t\geq 0}$ converges vaguely on the space $[-\infty,\infty]\times [0,\infty]\times (0,\infty]$ to the Poisson point process with intensity measure
\[d\zeta (s,f,z)= \alpha f^{\alpha-1} \lambda^{\star} e^{\lambda^{\star} s} e^{-z e^{\gamma (s+f)}}e^{\gamma (s+f)}  \, ds\,df\, dz.\]
\end{thm}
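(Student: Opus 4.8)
The plan is to establish convergence of $\Gamma_t$ to the claimed Poisson point process by a three-tier argument: first identify the limiting intensity of the family births and fitnesses, then superimpose the conditional size growth of each family, and finally control the negligible contributions and the independence structure. I would work with the germ of the point process: the foundation times $\tau_n$ and fitnesses $F_n$ form, up to the time change and thinning by $\beta/(\beta+\gamma)$, essentially the birth process of $(M(s))$. Since $M(s)\sim e^{\lambda^\star s}$ almost surely (as recorded in the first remark), the foundation events, viewed in the rescaled coordinate $s \mapsto \tau_n - T(t)$, have an empirical intensity converging to $\lambda^\star e^{\lambda^\star s}\,ds$ on $[-\infty,\infty]$; the normalisation $T(t)\sim (\alpha/\lambda^\star)\log t$ and $n(t)\sim t^\alpha \ell(1/t)^{-1}$ are exactly chosen so that at the reference time $T(t)$ there are order $t^\alpha$ families. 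For the fitness coordinate $(t-\tau_n)(1-F_n)$, each fresh family gets an independent $\mu$-sample, and the scaling $1-F_n \approx f/(t-\tau_n)$ together with (\ref{as}) gives, via $\mu(1-\eps,1)=\eps^\alpha\ell(\eps)$ and the uniform convergence theorem for slowly varying functions, a limiting fitness-marginal density proportional to $\alpha f^{\alpha-1}\,df$. Combining these two independent pieces already produces the $\alpha f^{\alpha-1}\lambda^\star e^{\lambda^\star s}\,ds\,df$ factor.

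Next I would condition on the sigma-algebra generated by the foundation times and fitnesses and analyse the third coordinate $e^{-\gamma(t-T(t))}Z_n(t)$. The family founded at time $\tau_n$ with fitness $F_n$ grows, in our parameterisation, like a Yule-type process run for time $t-\tau_n$ at effective rate $\gamma F_n$ (more precisely, birth-with-reinforcement gives $\E[Z_n(t)\mid \mathcal F_{\tau_n}] $ of order $e^{\gamma F_n(t-\tau_n)}$, and $Z_n(t)e^{-\gamma F_n(t-\tau_n)}$ converges to an exponential random variable by standard Yule-process martingale convergence). Plugging in $\tau_n \approx T(t)+s$ and $F_n\approx 1-f/(t-\tau_n)\approx 1-f e^{-\gamma(t-T(t))}\cdot(\text{lower order})$ — here one must track carefully that $\gamma F_n(t-\tau_n) = \gamma(t-\tau_n) - \gamma f + o(1) = \gamma(t-T(t)) - \gamma s - \gamma f + o(1)$ — the rescaled size becomes $z$ with $z \sim e^{-\gamma(s+f)} \cdot \mathrm{Exp}(1)$. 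Hence conditionally the size marginal is $\mathbb P(z \in dz) = e^{\gamma(s+f)} e^{-z e^{\gamma(s+f)}}\,dz$, which is precisely the remaining factor $e^{-z e^{\gamma(s+f)}} e^{\gamma(s+f)}\,dz$ in $d\zeta$. The Poisson nature of the limit then follows from a standard Laplace-functional / factorial-moment computation: because distinct families receive independent fitnesses and grow independently given their founding data, the $k$-th factorial moment measures of $\Gamma_t$ factorise asymptotically, which characterises the Poisson limit (Kallenberg's criterion), and one checks $\Gamma_t(A)$ has asymptotically no atoms of mass $\geq 2$.

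The main obstacle, I expect, is the rigorous control of the coupling between the true reinforced process and the idealised picture above — specifically, two intertwined issues. First, the foundation intensity $\lambda^\star e^{\lambda^\star s}$ and the law-of-large-numbers statement $M(s)\sim e^{\lambda^\star s}$ must be upgraded to a quantitative, uniform-in-$s$ control valid on the window $s \in [-A, A]$ around $T(t)$ as $t\to\infty$, and the dependence between $T(t)$ (a stopping time depending on the whole history) and the subsequent growth of families must be handled — presumably by a regeneration or strong-Markov argument at time $T(t)$. Second, one must rule out that an atypically old family (born at time $o(\log t)$, far to the left of the window) or an atypically fit family grows large enough to contribute a point with $z$-coordinate bounded away from zero; this is the content of the heuristic maximisation of $\exp(\gamma(t-s)(1-e^{-\lambda^\star s/\alpha}))$ at $s\sim(\alpha/\lambda^\star)\log t$, but making it a rigorous tightness/vanishing statement on $(0,\infty]$ requires a union bound over all families with a sufficiently sharp tail estimate on the Yule martingale limit combined with the extreme-value bound on fitnesses — and the regular variation (\ref{as}), rather than exact power behaviour, means every such estimate carries a slowly varying correction that must be shown not to accumulate. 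I would isolate this as a separate lemma bounding $\sum_n \indi\{e^{-\gamma(t-T(t))}Z_n(t) > \eps,\ |\tau_n - T(t)| > A\}$ in probability, uniformly, and let $A\to\infty$ after $t\to\infty$.
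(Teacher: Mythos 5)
Your proposal follows essentially the same route as the paper: the paper likewise proves a local Poisson limit by decoupling the point process into deterministic approximate birth times $\tau_n^\star(t)$ (justified by a quantitative version of $M(s)\sim e^{\lambda^\star s}$, Proposition~\ref{lem:approx_tau_n}), i.i.d.\ fitnesses and i.i.d.\ Yule martingale limits $\xi_n$, verifies Kallenberg's two conditions for the decoupled process, transports the intensity through $(s,f,z)\mapsto(s,f,e^{-\gamma(s+f)}z)$ exactly as in your expansion of $\gamma F_n(t-\tau_n)$, and then compactifies via union bounds over families combining tails of the Yule limit with Potter's bound for the slowly varying correction. The only refinement to note is that your single boundary lemma indexed by $|\tau_n-T(t)|>A$ must be supplemented by excluding families born \emph{inside} the window whose fitness coordinate $(t-\tau_n)(1-F_n)$ escapes to $+\infty$ (Lemma~\ref{lem:unfit}) and by an extreme-value statement that no family born long before $T(t)$ is fit enough to matter (Lemma~\ref{lem:born_early}); both follow from the machinery you already describe.
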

\medskip


\begin{remark}
{\rm Note the compactifications at $\pm\infty$ in Theorem~\ref{poisson}. As the limiting point process has a continuous density,
Theorem~\ref{poisson} implies that all mass of $\Gamma_t$ that asymptotically accumulates at infinity in one of the first two components,
must escape at zero in the last component, meaning that the only way points can disappear in the limit is because the corresponding family
size is small relative to the normalisation. }
\end{remark}
\bigskip

\begin{remark}
{\rm As there is no scaling in the first component of $\Gamma_t$, the limit theorem focuses on a time window of constant width around $T(t)$.
The theorem shows that this is wide enough to capture the largest family at time $t$. However, it turns out that in the condensation phase this is \emph{not} wide enough
to capture \emph{all} families that contribute to the condensate. This is why important questions on the emergence of the condensate remain open in this paper,
see for example the first two problems in Section~8.}
\end{remark}
\bigskip

Our Poisson limit result, Theorem~\ref{poisson}, readily  implies the following distributional limits
(denoted by $\Longrightarrow$) for the size, fitness and birth time of the largest family. 
\bigskip

\begin{cor}[Limits of family characteristics]\label{cor:as_laws}
\ \\[-2mm]
\begin{enumerate}[(i)]
\item Asymptotically, as $t\to\infty$,
\[
e^{-\gamma (t-T(t))} \max_{n\in\b N} Z_n(t) \Longrightarrow W^{-\frac{\gamma }{\lambda^{\star}}},
\]
where $W$ is exponentially distributed with parameter $\Gamma(\alpha+1)\Gamma(1+\frac{\lambda^{\star}}{\gamma } )(\lambda^{\star})^{-\alpha}$.
\item Under \eqref{cond}, denoting by $V(t)$ the fitness of the family of maximal size at time $t$, as $t\to\infty$, we have
\[t(1-V(t))\Longrightarrow V, \]
where $V$ is Gamma-distributed with scale parameter~$\lambda^\star$ and shape parameter $\alpha$.
\item  Denoting by $S(t)$ the birth time of the family of maximal size at time $t$, as $t\to\infty$, we have
\[ S(t)-T(t) \Longrightarrow U,  \]
where $U$ is a real valued random variable.
\end{enumerate}
\end{cor}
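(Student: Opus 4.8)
The plan is to deduce all three parts from the Poisson limit of Theorem~\ref{poisson} by reading them off the extremal point of $\Gamma_t$. Let $n^\star=n^\star(t)$ be the index of the largest family at time~$t$ and set $Z^\star_t:=e^{-\gamma(t-T(t))}Z_{n^\star}(t)=e^{-\gamma(t-T(t))}\max_{n\in\mathbb N}Z_n(t)$ (using $Z_n(t)=0$ for $n>M(t)$). Then $Z^\star_t$ is exactly the largest third coordinate among the points of $\Gamma_t$, and, since $\tau_{n^\star}=S(t)$ and $F_{n^\star}=V(t)$, the point of $\Gamma_t$ carrying it is $(S(t)-T(t),\,(t-S(t))(1-V(t)),\,Z^\star_t)$; so all three statements concern the point of $\Gamma_t$ with maximal third coordinate. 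To extract it from the merely vague convergence in Theorem~\ref{poisson}, I would fix $u>0$: the slab $\{z\ge u\}$ is relatively compact in $[-\infty,\infty]\times[0,\infty]\times(0,\infty]$ and is a $\zeta$-continuity set (as $\zeta$ has a density), so the restriction $\Gamma_t|_{\{z>u\}}$ converges in distribution, as a finite point process on a compact space, to the Poisson process $\Pi_u$ with the (finite) restricted intensity.

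The rest rests on two elementary calculations. First, by Fubini, the substitution $v=e^{\gamma s}$ in the $s$-integral, and absorbing the factor $e^{-\lambda^\star f}$ into the $f$-integral,
\[
\zeta\bigl(\{z>u\}\bigr)=\int_{\mathbb R}\!\int_0^\infty \alpha f^{\alpha-1}\lambda^\star e^{\lambda^\star s}e^{-ue^{\gamma(s+f)}}\,df\,ds=c\,u^{-\lambda^\star/\gamma},\qquad c:=\Gamma(\alpha+1)\,\Gamma\!\bigl(1+\tfrac{\lambda^\star}{\gamma}\bigr)(\lambda^\star)^{-\alpha}.
\]
Since $\P(Z^\star_t\le u)=\P(\Gamma_t(\{z>u\})=0)\to e^{-c\,u^{-\lambda^\star/\gamma}}=\P(W^{-\gamma/\lambda^\star}\le u)$ with $W\sim\mathrm{Exp}(c)$, this gives~(i). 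Second, $\zeta$ is non-atomic, so $\Pi_u$ a.s.\ consists of finitely many points with pairwise distinct third coordinates; hence the top point of $\Pi_u$ is a.s.\ well defined and the map ``point of maximal third coordinate'' is a.s.\ continuous at the configurations charged by $\Pi_u$, and by the continuous mapping theorem the top point of $\Gamma_t|_{\{z>u\}}$ converges in law to that of $\Pi_u$. Letting now $u\downarrow0$ and using $e^{-c\,u^{-\lambda^\star/\gamma}}\to0$, the global top point of $\Gamma_t$ converges in law to the (a.s.\ unique, a.s.\ finite) point $(S^\star,F^\star,Z^\star)$ of the full Poisson process with maximal third coordinate, whose density at $(s,f,z)$ is the intensity of $\zeta$ times $e^{-c\,z^{-\lambda^\star/\gamma}}$. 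Reading off the first coordinate gives $S(t)-T(t)\Longrightarrow S^\star$, a proper real-valued random variable since the displayed density integrates to one; this is~(iii). Integrating the density over $s$ (again $v=e^{\gamma s}$) and then over $z$ (via $w=z^{-\lambda^\star/\gamma}$) leaves the marginal density $\tfrac{(\lambda^\star)^\alpha}{\Gamma(\alpha)}f^{\alpha-1}e^{-\lambda^\star f}$ for $F^\star$, i.e.\ the Gamma law in~(ii), and at the same time $(t-S(t))(1-V(t))\Longrightarrow F^\star$.

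For the normalisations I would invoke the remark after Theorem~\ref{th:cond}: $T(t)\sim\tfrac{\alpha}{\lambda^\star}\log t$, which is $o(t)$ and tends to $\infty$, so $e^{-\gamma(t-T(t))}$ is a genuine vanishing normalisation, and, combined with $S(t)=T(t)+O_{\mathbb P}(1)$ from part~(iii), gives $t-S(t)=t(1+o_{\mathbb P}(1))$ under \eqref{cond}; Slutsky's lemma then upgrades the last convergence to $t(1-V(t))\Longrightarrow F^\star$, finishing~(ii). The hard part will be the continuous-mapping step for the extremal point: one must check that no mass of $\Gamma_t$ escapes to the excluded boundary $\{z=0\}$ (controlled by the smallness of $e^{-c\,u^{-\lambda^\star/\gamma}}$ as $u\downarrow0$), none escapes to $s=\pm\infty$ (controlled by tightness of the top point of $\Pi_u$), and that the probability of an asymptotic tie among the integer-valued sizes $Z_n(t)$ vanishes, so that the choice of $n^\star$ on such events is immaterial; all of this is routine once $\Gamma_t|_{\{z>u\}}$ is known to converge weakly on a compact space.
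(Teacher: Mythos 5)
Your proposal is correct and follows essentially the same route as the paper: part (i) via the void/count probability of $\Gamma_t$ on the slab $\{z\geq x\}$, with the identical computation $\zeta(\{z\geq x\})=\Gamma(\alpha+1)\Gamma(1+\frac{\lambda^\star}{\gamma})(\lambda^\star)^{-\alpha}x^{-\lambda^\star/\gamma}$, and parts (ii)--(iii) by reading off the law of the point of maximal third coordinate of the limiting Poisson process, whose density is the intensity times $e^{-\zeta(\{z'>z\})}$. Your additional care with the restriction-to-slabs and continuous-mapping step, and the Slutsky argument replacing $t-S(t)$ by $t$ in (ii), only makes explicit details the paper leaves implicit.
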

\bigskip

\begin{remark}
{\rm The birth time of the family of maximal size at time $t$ is of asymptotic order $T(t)+O(1)$ and hence 
(as seen above) of  leading order $\nicefrac{\alpha}{\lambda^\star} \log t$.  This answers the question of Borgs et al.~\cite{BC07} about  the rate at which 
new nodes with higher fitness become the leading influence in the population, see Figure~1 for a simulation.}
\end{remark}
\bigskip

\begin{thm}[The winner does not take it all]\label{nowinner}%
Under assumption (\ref{as}) the size of the largest family is negligible relative to the overall population size, i.e.\
$$
\lim_{t\to\infty} \frac{\max _{n\in\{1,\dots, M(t)\}} Z_n(t)}{N(t)} =0, \text{ in probability.}
$$
\end{thm}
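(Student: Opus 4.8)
The plan is to read $N(t)/\max_n Z_n(t)$ directly off the Poisson limit of Theorem~\ref{poisson}, with no further control of the population growth required. The key observation is that the total third‑coordinate mass of $\Gamma_t$ is exactly the normalised population size: summing the last components of all atoms,
\[
\int z\,d\Gamma_t\;=\;\sum_{n=1}^{M(t)}e^{-\gamma(t-T(t))}Z_n(t)\;=\;e^{-\gamma(t-T(t))}N(t).
\]
By Corollary~\ref{cor:as_laws}(i), $e^{-\gamma(t-T(t))}\max_{n}Z_n(t)$ converges in distribution to the almost surely finite random variable $W^{-\gamma/\lambda^{\star}}$, hence the family $\bigl(e^{-\gamma(t-T(t))}\max_{n}Z_n(t)\bigr)_{t}$ is tight. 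It therefore suffices to show that $e^{-\gamma(t-T(t))}N(t)\to\infty$ in probability; the theorem then follows by writing $\max_n Z_n(t)/N(t)$ as the ratio of these two quantities, a tight sequence over one tending to infinity.

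For the divergence I would exploit that the limiting intensity $\zeta$ has \emph{infinite} total $z$‑mass. Integrating out the last variable, $\int_0^\infty z\,e^{-ze^{\gamma(s+f)}}e^{\gamma(s+f)}\,dz=e^{-\gamma(s+f)}$, so
\[
\int z\,d\zeta\;=\;\alpha\lambda^{\star}\Bigl(\int_0^\infty f^{\alpha-1}e^{-\gamma f}\,df\Bigr)\Bigl(\int_{-\infty}^{\infty}e^{(\lambda^{\star}-\gamma)s}\,ds\Bigr)\;=\;+\infty,
\]
the $s$‑integral being $\int_{\mathbb{R}}1\,ds$ in the condensation phase (where $\lambda^{\star}=\gamma$) and divergent for any other value of $\lambda^{\star}$ as well. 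Since $z$ is unbounded — and the atoms with $z\to0$ are precisely those lost in the vague limit — it is not itself a legitimate test function, so I would pass to lower approximations. Fix, for $0<\eps<K$, a continuous $g_{\eps,K}$ with $0\le g_{\eps,K}(s,f,z)\le z$, equal to $z\wedge K$ for $z\ge2\eps$ and vanishing for $z\le\eps$; its support lies in the compact set $[-\infty,\infty]\times[0,\infty]\times[\eps,\infty]$, so Theorem~\ref{poisson} gives $\int g_{\eps,K}\,d\Gamma_t\Longrightarrow\int g_{\eps,K}\,d\Pi$, where $\Pi$ is the Poisson point process with intensity $\zeta$. By monotone convergence $\int g_{\eps,K}\,d\Pi\uparrow\int z\,d\Pi$ as $\eps\downarrow0$, $K\uparrow\infty$, and since $\E\bigl[\int z\,d\Pi\bigr]=\int z\,d\zeta=\infty$ one checks that in fact $\int z\,d\Pi=\infty$ almost surely: the counts of atoms of $\Pi$ with $z$ in consecutive dyadic bands are independent, have means tending to infinity, and each contributes a bounded‑away‑from‑zero amount to $\int z\,d\Pi$, so by Borel–Cantelli the sum is infinite. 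Hence, given $C>0$ and $\delta>0$, one can choose $\eps,K$ with $\P\bigl(\int g_{\eps,K}\,d\Pi>C\bigr)\ge1-\delta$, and then, using $e^{-\gamma(t-T(t))}N(t)\ge\int g_{\eps,K}\,d\Gamma_t$ and the portmanteau theorem for the open set $(C,\infty)$,
\[
\liminf_{t\to\infty}\P\bigl(e^{-\gamma(t-T(t))}N(t)>C\bigr)\;\ge\;\P\Bigl(\int g_{\eps,K}\,d\Pi>C\Bigr)\;\ge\;1-\delta .
\]
As $C$ and $\delta$ are arbitrary this yields $e^{-\gamma(t-T(t))}N(t)\to\infty$ in probability, and combined with the tightness of the normalised largest family it completes the proof.

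The argument is short because Theorem~\ref{poisson} already encodes everything relevant; in particular one does \emph{not} need any refined asymptotics of $N(t)$ itself, which is the genuinely difficult quantity in the condensation phase (a more hands‑on route, lower bounding $N(t)$ by the combined size of the families whose fitness lies in a band of width of order $1/t$ near the tip, runs straight into the need for precisely that control). The only point requiring care is the last step: since $z$ is unbounded and not compactly supported, the passage from the vague convergence of $\Gamma_t$ to the divergence of $\int z\,d\Gamma_t$ must go through the lower approximations $g_{\eps,K}$ and through the almost‑sure — not merely in‑mean — infiniteness of $\int z\,d\Pi$. I expect this to be the main, though mild, obstacle.
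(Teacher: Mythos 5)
Your proposal is correct and takes essentially the same route as the paper: both identify $e^{-\gamma(t-T(t))}N(t)=\int z\,d\Gamma_t$, combine tightness of the normalised maximum (Corollary~\ref{cor:as_laws}(i)) with divergence in probability of this normalised population size, and prove the divergence by truncating the third coordinate away from $0$, passing to the Poisson limit of Theorem~\ref{poisson}, and removing the truncation. The only differences are cosmetic: the paper uses indicator truncations, uniform bands on $(\eps,1)$ and Chebyshev's inequality where you use continuous test functions, dyadic bands and Borel--Cantelli.
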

\medskip

\begin{remark}
{\rm Theorem~\ref{nowinner} means that asymptotically no single family contributes a positive proportion of the 
total mass, hence if there is condensation it is always \emph{non-extensive}. This means in the context of Example~2 
that no vertex attracts a positive fraction of the edges in the network. This is at odds with the informal description of condensation in the preferential attachment networks  by Bianconi and Barab\'asi~\cite{BB01}, who are stating that \emph{`the fittest node [is] acquiring a finite fraction of the links,  independent of the size of the network.'}  It is also at odds with more recent work of Godr\`eche and Luck~\cite{GL10} who use
a numerical study and further analysis based on it to conclude that asymptotically there is an unbounded number of macroscopic families. Apparently the phenomenon we investigate here is too subtle to be reliably captured by non-rigorous techniques. In the context of Example~3 our theorem states that the proportion of balls of any colour goes to zero, uniformly over all colours.}
\end{remark}
\bigskip

The remainder of this paper is organised as follows. In Section~3 we prove Theorem~\ref{th:cond} by applying the theory of general branching processes. Section~4 contains an explicit construction of our model and uses this to give crude bounds on the rate of growth of the branching process. These will be used in Section~5 to derive a local version of Theorem~\ref{poisson}, i.e.\ a version without the essential compactifications of the underlying space. Section~6 provides the estimates need to compactify the space, and in Section~7 we complete the proof of  Theorem~\ref{poisson} and derive Corollary~\ref{cor:as_laws} and Theorem~\ref{nowinner}. The final section, Section~8, lists some interesting open problems.
\pagebreak[3]

\section{Proof of the condensation phenomenon}

In the last years a couple of techniques were developed to prove limit theorems for empirical distributions in networks and related structures, see for example Borgs et al.~\cite{BC07}, Bhamidi~\cite{Bh07} and Dereich and Ortgiese~\cite{DO14}. We now indicate how the theory of general branching processes can be used to prove Theorem~\ref{th:cond}. Our method is similar to the one described in~\cite{Bh07} but circumvents the use of multitype branching.
\smallskip

\pagebreak[3]

\subsection{The standard construction of the model}
We start with a construction of our model on an explicit probability space. Let 
\begin{itemize}
\item $F$ be a $\mu$-distributed random variable,
\item given $F$ let $Y=(Y(t) \colon t\geq 0)$ be an independent  Yule process with rate $\gamma F$, 
\item given $F, Y$ we define a simple point process $\Pi=(\Pi(t) \colon t\geq 0)$ as $\Pi=\Pi^{\ssup 1}+\Pi^{\ssup 2}$
where $\Pi^{\ssup 1}$ only jumps at the jumps of $Y$, and does so independently for every jump with probability
$\nicefrac{\beta+\gamma-1}{\gamma}$, and $\Pi^{\ssup 2}$ is an independent, inhomogeneous Poisson process with intensity measure
$(1-\gamma) F  Y(t) \, dt$.
\end{itemize}
\pagebreak[3]

We let $(\Omega,\cF,\b P)$ be the countable product of the joint law of $(F,Y,\Pi)$ 
and denote the coordinate process by $(F_n, Y_n, \Pi_n)$, for $n\in\mathbb N$.
We let $\tau_1=0$ and $Z_1(t)=Y_{1}(t)$ and iteratively define, for $n\in \{2,3,\dots\}$,
\begin{equation}\label{eq:tau_n}
\tau_{n}=\inf \{t>\tau_{n-1}: \exists m\in\{1,\dots,n-1\}\text{ with } \Delta \Pi_m(t-\tau_m)=1\}
\end{equation}
and
\[
Z_{n}(t)=
\begin{cases} Y_{n}(t-\tau_n), &\text{ if }t\geq \tau_n\\
0, &\text{ otherwise.}
\end{cases}
\]
We let $M(t)=\max\{ n \colon \tau_n \leq t\}$, set
$N(t)= \sum_{n=1}^{M(t)} Z_{n}(t)$, and denote by $\sigma_1, \sigma_2,\ldots$ the jump times of
$(N(t) \colon t\geq 0)$. It is obvious that this construction defines the reinforced branching process described 
in the introduction. Indeed $(Y_n(t - \tau_n) \colon t\geq \tau_n)$ gives the times of birth of new individuals in the $n$th family, 
and $(\Pi_n(t - \tau_n)\colon t\geq \tau_n)$ the times of creation of the new families which descend directly from the $n$th family.  
\smallskip

For later reference we now recall some facts about Yule processes.\smallskip

\begin{lem}\label{lem:doob}
Let $Y$ be a Yule process with rate $\lambda$. Then,
\begin{enumerate}[(a)]
\item $(e^{-\lambda t}Y(t))_{t\geq 0}$ is a uniformly integrable martingale.
\item The almost sure limit $\lim_{t\to\infty} e^{-\lambda t }Y(t)$ is standard exponentially distributed.
\item For $u\in[0,1)$ one has  
\begin{align}\label{eq1108-1}
\b E\Bigl[\sup_{t\geq 0} \exp\{ u e^{- \lambda t} Y(t)\}\Bigr]\leq \frac{4}{1-u}.
\end{align}
\item Denote by  $T_n = \inf\{s\geq 0 \colon Y(s)\geq n\}$. Then for every $\eps>0$  with high probability  as $\kappa\to\infty$ for all  $n_0,n\geq \kappa$ 
$$
\frac 1\lambda \log \frac n{n_0}-\eps\leq T_n-T_{n_0}\leq \frac 1\lambda \log  \frac n{n_0}+\eps.
$$
\end{enumerate}
\end{lem}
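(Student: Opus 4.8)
The plan is to exploit the explicit law of the Yule process: $Y(t)$ is geometrically distributed with success parameter $q_t:=e^{-\lambda t}$, so that $\b E[Y(t)]=e^{\lambda t}$ and $\b E[Y(t)^2]=(2-q_t)e^{2\lambda t}$. For part~(a), the branching property (conditionally on $\cF_s$, the shifted process $(Y(s+u))_{u\ge 0}$ is a sum of $Y(s)$ independent Yule processes started from~$1$) yields $\b E[Y(t)\mid\cF_s]=e^{\lambda(t-s)}Y(s)$, so $(e^{-\lambda t}Y(t))_{t\ge0}$ is a martingale; and $\sup_{t\ge0}\b E[(e^{-\lambda t}Y(t))^2]=\sup_{t\ge0}(2-q_t)\le 2$, so it is bounded in $L^2$ and hence uniformly integrable. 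For part~(b), a nonnegative $L^2$-bounded martingale converges almost surely to a limit $W$, and to identify the law of $W$ I would use $\b P(e^{-\lambda t}Y(t)>x)=(1-q_t)^{\lfloor x/q_t\rfloor}\to e^{-x}$ as $t\to\infty$; this is convergence in distribution to the standard exponential law, which together with the almost sure convergence forces $W$ to be standard exponential.

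For part~(c), the key point is that since $y\mapsto e^{uy/2}$ is convex, the process $X_t:=\exp\{\tfrac{u}{2}e^{-\lambda t}Y(t)\}$ is a nonnegative submartingale, so Doob's $L^2$ maximal inequality gives $\b E[\sup_{s\le t}X_s^2]\le 4\,\b E[X_t^2]=4\,\b E[\exp\{u e^{-\lambda t}Y(t)\}]$. Since $\sup_{s\le t}X_s^2=\sup_{s\le t}\exp\{u e^{-\lambda s}Y(s)\}$ increases to $\sup_{s\ge0}\exp\{u e^{-\lambda s}Y(s)\}$, monotone convergence reduces~(c) to the uniform estimate $\b E[\exp\{u e^{-\lambda t}Y(t)\}]\le\tfrac1{1-u}$. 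Summing the geometric series, with $q=q_t$,
\[
\b E\big[\exp\{u e^{-\lambda t}Y(t)\}\big]=\sum_{k\ge1}e^{uqk}\,q(1-q)^{k-1}=\frac{q\,e^{uq}}{1-(1-q)e^{uq}},
\]
the series converging because $(1-q)e^{uq}\le e^{q}(1-q)\le 1$ by the elementary bound $e^{x}(1-x)\le 1$; and the inequality $\tfrac{q e^{uq}}{1-(1-q)e^{uq}}\le\tfrac1{1-u}$ is, after clearing denominators, exactly $e^{uq}(1-uq)\le1$, i.e.\ the same elementary inequality with $x=uq\in[0,1)$.

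For part~(d), I would write $T_n-T_{n_0}=\sum_{j=n_0}^{n-1}E_j$ for $n\ge n_0$ (so that the claim for general $n,n_0$ follows by symmetry), where $E_j\sim\mathrm{Exp}(\lambda j)$ is the holding time in state~$j$, and split $E_j=\tfrac1{\lambda j}+(E_j-\tfrac1{\lambda j})$. The deterministic part sums to $\tfrac1\lambda\sum_{j=n_0}^{n-1}\tfrac1j$, which lies between $\tfrac1\lambda\log\tfrac n{n_0}$ and $\tfrac1\lambda\log\tfrac n{n_0}+\tfrac1{\lambda n_0}$, hence is within $\tfrac1{\lambda\kappa}$ of $\tfrac1\lambda\log\tfrac n{n_0}$ uniformly over all $n\ge n_0\ge\kappa$. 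The fluctuation part equals the increment $A_{n-1}-A_{n_0-1}$ of the martingale $A_m:=\sum_{j=1}^m(E_j-\tfrac1{\lambda j})$, which is bounded in $L^2$ since $\sum_j(\lambda j)^{-2}<\infty$; thus $A_m$ converges almost surely, and by Doob's $L^2$ inequality $\b P(\sup_{m\ge\kappa}|A_m-A_\kappa|>\delta)\le 4\delta^{-2}\sum_{j>\kappa}(\lambda j)^{-2}\to0$, so with high probability as $\kappa\to\infty$ the oscillation $\sup_{m,m'\ge\kappa}|A_m-A_{m'}|$ is at most~$\eps$. Adding the two contributions gives the two-sided bound, uniformly in $n,n_0\ge\kappa$, for $\kappa$ large.

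I expect part~(c) to be the only genuinely delicate point: generic maximal inequalities ($L^p$ for $p$ near~$1$, or $L\log L$) only give a bound of order $(1-u)^{-2}$, and recovering the clean $\tfrac4{1-u}$ needs precisely the combination of Doob's $L^2$ inequality applied to the square root $X_t$ with the exact evaluation of the exponential moment of the geometric law, where $e^x(1-x)\le1$ is what makes things line up with $(1-u)^{-1}$. In~(d) the subtlety worth flagging is the uniformity over \emph{all} pairs $n,n_0\ge\kappa$, which is why the argument runs through the almost sure convergence (and hence vanishing tail oscillation) of the martingale $A_m$ rather than a union bound of Chebyshev estimates.
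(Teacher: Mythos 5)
Your proposal is correct, and for parts (a)--(c) it is essentially the paper's argument with the standard facts filled in: the paper simply cites Athreya--Ney for (a) and (b), and its proof of (c) is exactly your submartingale trick --- apply Doob's $L^2$ maximal inequality to $\exp\{\tfrac u2 e^{-\lambda t}Y(t)\}$, which is a submartingale by Jensen. The only difference in (c) is how the right-hand side is evaluated: the paper bounds $\b E[\sup_t(\cdot)^2]$ by $4\,\b E[e^{uA}]=\tfrac4{1-u}$ using the standard exponential martingale limit $A$ from (b), whereas you compute the geometric moment generating function at each finite $t$, check the uniform bound $\tfrac{qe^{uq}}{1-(1-q)e^{uq}}\le\tfrac1{1-u}$ via $e^x(1-x)\le1$, and pass to the limit by monotone convergence; both give the same constant, and your route has the mild advantage of not needing (b) first. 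Part (d) is where you genuinely diverge. The paper controls $T_n-T_{n_0}$ through the martingale $\xi_t=e^{-\lambda t}Y(t)$: it sets $R(\kappa)=\sup\{\xi_s/\xi_u\colon s,u\ge T_\kappa\}$, uses the strict positivity of the a.s.\ limit to get $R(\kappa)\to1$ in probability, reads off $\tfrac n{n_0}\in[R(\kappa)^{-1}e^{\lambda(T_n-T_{n_0})},R(\kappa)e^{\lambda(T_n-T_{n_0})}]$, and takes logarithms. You instead use the explicit representation $T_n-T_{n_0}=\sum_{j=n_0}^{n-1}E_j$ with independent $E_j\sim\mathrm{Exp}(\lambda j)$, compare the mean part to $\tfrac1\lambda\log\tfrac n{n_0}$ via the harmonic sum, and control the fluctuations by the a.s.\ convergence (hence vanishing tail oscillation) of the $L^2$-bounded martingale $\sum_j(E_j-\tfrac1{\lambda j})$. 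Your version is more elementary and self-contained (it does not invoke (a)--(b) at all, and correctly handles the required uniformity over all pairs $n,n_0\ge\kappa$ through the Cauchy property of the centered sums rather than a union bound); the paper's version is shorter once the martingale limit is in hand and generalises more readily to situations where the jump chain is not explicit --- indeed the same ratio argument is reused verbatim in the proof of Proposition~4.1, where the inter-arrival times are only sandwiched between exponentials.
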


\begin{proof}
$(a)$ and $(b)$ are standard and proofs can be found in Athreya and Ney~\cite{AN}. Denote
the martingale limit in~$(b)$ by $A$. For the proof of $(c)$ note that  $(\exp\{ u e^{-t } Y(t)/2\} \colon t\geq0)$ is a sub-martingale by  Jensen's inequality. Doob's martingale inequality then gives
$$
\b E\Bigl[\sup_{t\geq 0} \bigl(\exp\{ u e^{-t } Y(t)/2\}\bigr)^2\Bigr]\leq 
4  \, \b E\bigl[\exp\{u A\}\bigr] =\frac{4}{1-u}.
$$
To prove $(d)$ we may assume, without loss of generality, that $\lambda=1$.
Consider the martingale given by
$\xi_t=e^{-t} Y(t)$, and let $R(\kappa):=\sup\{ \frac {\xi_s}{\xi_u} :  s, u\geq T_\kappa\}$. By $(b)$, $(\xi_t)_{t\geq 0}$ has an almost surely finite, strictly positive limit and one has  $\lim_{\kappa\to\infty} R({\kappa})=1$, in probability. Further
$$
\frac {Y(t+T_{n_0})}{Y(T_{n_0})}= e^{t} \, \frac {\xi_{t+T_{n_0}}}{\xi_{T_{n_0}}}\in\Bigl[\frac1 {R(\kappa)} \,e^t, R(\kappa)\,e^t\Bigr].
$$
An application of  the estimate for all $n,n_0\geq \kappa$ with  $t=T_n-T_{n_0}$  gives that
$$
\frac {n}{n_0}\in \Bigl[\frac1 {R(\kappa)} \,e^{T_n-T_{n_0}}, R(\kappa)\,e^{T_n-T_{n_0}}\Bigr].
$$
Taking logarithms and recalling that $R(\kappa)$ tends to $1$ yields the statement.
\end{proof}

\subsection{General branching process theory}

The processes $(M(t) \colon t>0)$ is a general branching process, or Crump-Mode-Jagers process, 
with the laws of offspring times given by the point process $(\Pi(t) \colon t>0)$. Nerman~\cite{Nerman}
provides a strong law of large numbers for this class of processes under the assumption that
there exists  ${\lambda^*}>\gamma$, called the \emph{Malthusian parameter}, such that
$$\int_0^\infty e^{-{\lambda^*} s}  \, {\mathbb E} \Pi(ds)=1.$$
An easy calculation 
(which we skip since it is already carried out in detail in the particular case of Example~1 above)
shows that this is equivalent to
$$\frac{\beta}{ \beta+\gamma} \int_0^1 \frac{\lambda^{\star}}{\lambda^{\star}-\gamma x} \, d\mu(x) =1.$$ 
Suppose that $\phi=\phi[F, Y, \Pi]\colon [0,\infty) \to \N_0$ is a cadlag process taking values in the nonnegative integers, such that $\phi(t)$ is interpreted as a score assigned to a family $t$ time units 
after its foundation. We assume the function $t\mapsto \E[\phi(t)]$ is almost everywhere continuous and 
there exists $h\colon[0,\infty)\to(0,\infty)$ integrable, bounded and non-increasing such that
$$\E\Big[ \sup_{t\geq 0} \frac{e^{-{\lambda^\star} t} \phi(t)}{h(t)}\Big]<\infty.$$

Letting $\phi_n=\phi[F_n, Y_n, \Pi_n]$ we define the \emph{score of the population} 
at time $t$ as
$$Z^\phi(t)= \sum_{n \colon \tau_n<t} \phi_n(t-\tau_n).$$
We define
$$m^\phi_\infty = \frac{\int_0^\infty e^{-{\lambda^{\!\star}} t} \E \phi(t)\, dt}{\int_0^\infty te^{-{\lambda^{\!\star}} t}\, \E\Pi(dt)}.$$
Nerman~\cite{Nerman} shows that there exists a positive random variable $W$, not depending on $\phi$,  such that
\begin{equation}\label{nerman}
\lim_{t\uparrow\infty} e^{-{\lambda^{\!\star}} t} Z^\phi(t) = W \, m^\phi_\infty \qquad \mbox{ almost surely.}
\end{equation}
Under the assumption that \eqref{cond} fails and $\lambda^\star>\gamma$ we can apply this result with
$\phi(t)=Y(t)$, which satisfies the assumptions by Lemma~\ref{lem:doob}, to get 
$$\lim_{t\uparrow\infty} e^{-{\lambda^{\!\star}} t} N(t) = W \, m^\phi_\infty \qquad \mbox{ almost surely.}$$
Choosing $\phi(t)=Y(t) \1\{F\geq 1-x\}$, for $0<x<1$, and combining with the above gives
$$\lim_{t\uparrow\infty}\Xi_t[1-x,1] =\frac{\beta}{\beta+\gamma} \int_{1-x}^1\frac{\lambda^\star}{\lambda^{\star}-\gamma x} \, d\mu(x) \qquad \mbox{ almost surely,}$$
as required.

\subsection{A coupling technique}

To extend results to the case when $\lambda^\star=\gamma$, or when no Malthusian parameter is available, we use a coupling technique.  We look at the reinforced branching process 
at the times $(\sigma_n)_{n\geq 1}$ of the birth events and abbreviate \smash{$\widehat\Xi_n:=\Xi_{\sigma_n}$. }
\smallskip

Fix $\varepsilon>0$.  We define a discrete-time branching process 
whose empirical fitness distribution $\widehat\Xi^{\ssup{\varepsilon}}_n$ has the property
that, for all $n\geq 0$, \smash{$(\widehat\Xi_n, \widehat\Xi_n^{\ssup{\varepsilon}})\in\mathcal S$}, 
where $\mathcal S$ is the subset of the set of pairs of probability measures on $[0,1]$ defined by
$\mathcal S := \{(\nu, \mu) \colon 
\nu([a,b])\geq \mu([a,b])$ for all $a, b \in[0,1-\varepsilon)\}.$
Let $(U_n)_{n\geq 1}$ be a sequence of i.i.d.\ random variables uniformly distributed on $[0,1]$. 
\smallskip

At time zero, the new process contains one family of fitness $F_1\1\{F_1< 1-\varepsilon\} + 
\1\{F_1 \geq 1-\varepsilon\}$ and thus 
\smash{$(\widehat\Xi_0,\widehat\Xi^{\ssup{\varepsilon}}_0)\in\mathcal S$}.
Assume now that, \smash{$(\widehat\Xi_n, \widehat\Xi^{\ssup{\varepsilon}}_n)\in\mathcal S$.}
We construct the new process at time $n+1$ as follows:
\begin{itemize}
\item if a new family of fitness $f$ is born at time $n+1$ (in the original process), 
then we add in the (new) process a new family of fitness 
$f \1\{f< 1-\varepsilon\} + \1\{ f \geq1-\varepsilon\}$ born at time $n+1$;
\item if an individual of fitness larger than $1-\varepsilon$ is born at time $n+1$ in the original process, 
then we add a new individual of fitness 1 born at time $n+1$;
\item if an individual of fitness $f< 1-\varepsilon$ is born at time $n+1$ in the original process, then
if
\[U_{n+1}\leq \left(\frac{\widehat\Xi_n^{\ssup \eps}(\{f\})}{\int_0^1 x\,{\rm d}\widehat\Xi^{\ssup \eps}_n(x)}\right) \left(\frac{\widehat\Xi_n(\{f\})}{\int_0^1 x\,{\rm d}\widehat\Xi_n(x)}\right)^{-1},\]
we add an individual of fitness $f$ born at time $n+1$, otherwise, we add an individual of fitness $1$.
\end{itemize}
By construction, $(\widehat\Xi_{n+1}, \widehat\Xi^{\ssup{\varepsilon}}_{n+1})\in\mathcal S$.
It is now easy to check that the new process is the discrete-time version of the reinforced branching process
with fitness distribution~$\mu_{\varepsilon}:=\1[0,1-\varepsilon)\mu + \mu[1-\varepsilon, 1]\delta_1$.
Since
\[\frac{\beta}{\beta+\gamma}\int_0^1 \frac{d\mu_{\varepsilon}(x)}{1-x} = \infty,\]
the new process admits a Malthusian parameter $\lambda_{\varepsilon}$ 
and $\lambda_{\varepsilon} \downarrow \gamma$ as $\varepsilon\downarrow 0$.
We deduce that, for all $0\leq a, b <1-\eps$,  we have
\[\lim_{n\to\infty} \widehat\Xi^{\ssup{\varepsilon}}_n\big([a,b]\big) =
\lim_{t\to\infty} \widehat\Xi^{\ssup{\varepsilon}}_t\big([a,b]\big) 
= \frac{\beta}{\beta+\gamma} \int_a^b \frac{\lambda_\varepsilon }{\lambda_\varepsilon -\gamma x}\, d\mu(x)\]
almost surely.
For all $0\leq a, b <1$ and $0<\varepsilon < 1-b$, we thus have
\[\liminf_{t\to\infty} \Xi_t\big([a,b]\big) 
=\liminf_{n\to\infty} \widehat\Xi_n\big([a,b]\big) 
\geq \lim_{n\to\infty} \widehat\Xi_n^{\ssup{\varepsilon}}\big([a,b]\big)
= \frac{\beta}{\beta+\gamma} \int_a^b \frac{\lambda_\varepsilon}{\lambda_\varepsilon -\gamma x}\, d\mu(x).\]
Letting $\varepsilon\downarrow 0$ gives the lower bound. A similar argument gives a coupling with
the reinforced branching process with  $\mu$ replaced by
$\mu^{\ssup\varepsilon} = 
\1[0,1-\varepsilon) \mu +  \mu[1-\varepsilon,1]\delta_{1-\varepsilon},$
and provides an upper bound, which is enough to conclude the proof of Theorem~\ref{th:cond}.

\section{Estimates for the number of families in the population}\label{sec:approx_tau_n}

The main difficulty in our model is that the time of birth of the $n$th family is not known with good accuracy. 
We now give a rough deterministic bound for the births occurring around the stopping times~$T(t)$. 
\bigskip

\begin{prop}\label{lem:approx_tau_n}
For all $\varepsilon\in(0,\lambda^\star)$, we have with high probability as $n_0\to\infty$, for all $n\geq n_0$,
\[\frac{1}{\lambda^{\star}+\eps} \log \frac{n}{n_0}-\eps 
\leq \tau_n-\tau_{n_0} \leq \frac1{\lambda^{\star}-\varepsilon} \log \frac{n}{n_0}+\eps,\]
and, for all $1 \leq n\leq n_0$,
\[\frac{1}{\lambda^{\star}-\eps} \log \frac{n}{n_0} -\eps
\leq \tau_n-\tau_{n_0} \leq \frac1{\lambda^{\star}+\varepsilon} \log \frac{n}{n_0}+\eps.\]
\end{prop}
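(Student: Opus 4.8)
The plan is to transfer the whole question to the birth-event times $\sigma_n$, exploiting that between two consecutive birth events the total birth rate is constant. Note first that $M$ increases by exactly one at each $\tau_n$, so if $k_n:=\inf\{k\colon M(\sigma_k)\geq n\}$ is the index of the birth event that founds the $n$-th family, then $\tau_n=\sigma_{k_n}$. Conditionally on $\mathscr F_{\sigma_n}$ the waiting time $\sigma_{n+1}-\sigma_n$ is exponential with rate
\[
R_n:=\sum_m F_m Z_m(\sigma_n)=N(\sigma_n)\int_0^1 x\,d\Xi_{\sigma_n}(x),
\]
since the family sizes are frozen on $(\sigma_n,\sigma_{n+1})$. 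Hence $E_n:=R_n(\sigma_{n+1}-\sigma_n)$ is, conditionally on $\mathscr F_{\sigma_n}$, standard exponential, so $(E_n)_{n\geq1}$ is i.i.d.\ standard exponential and $\sigma_n-\sigma_{n_0}=\sum_{j=n_0}^{n-1}E_j/R_j$.

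Next I would feed in the first-order asymptotics of $R_n$. By the elementary law of large numbers quoted in the Remark after Theorem~\ref{th:cond}, $N(\sigma_n)/n\to\beta+\gamma$ almost surely, and by Theorem~\ref{th:cond} the empirical mean fitness $\int_0^1 x\,d\Xi_{\sigma_n}(x)$ converges almost surely to $\lambda^\star/(\beta+\gamma)$; therefore $R_n/n\to\lambda^\star$ almost surely. Fixing $\delta\in(0,1)$, almost surely there is a random $n_1$ with $(1-\delta)\lambda^\star j\leq R_j\leq(1+\delta)\lambda^\star j$ for all $j\geq n_1$, so that for $n\geq n_0\geq n_1$
\[
\frac1{(1+\delta)\lambda^\star}\sum_{j=n_0}^{n-1}\frac{E_j}{j}\ \leq\ \sigma_n-\sigma_{n_0}\ \leq\ \frac1{(1-\delta)\lambda^\star}\sum_{j=n_0}^{n-1}\frac{E_j}{j}.
\]
Writing $E_j=1+(E_j-1)$ splits $\sum_{j=n_0}^{n-1}E_j/j$ into $\sum_{j=n_0}^{n-1}1/j=\log(n/n_0)+O(1/n_0)$ and the $L^2$-bounded martingale $\sum_j(E_j-1)/j$ (variances summable), which converges almost surely, so that $\sup_{n\geq n_0}\bigl|\sum_{j=n_0}^{n-1}(E_j-1)/j\bigr|\to0$ almost surely as $n_0\to\infty$. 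Combining this with the last display and taking $\delta$ small (using $\log(n/n_0)\geq0$ to absorb the multiplicative $\delta$-error into a shift of prefactor) gives, almost surely for all $n_0$ large and all $n\geq n_0$,
\[
\tfrac1{\lambda^\star+\varepsilon}\log\tfrac n{n_0}-\varepsilon\ \leq\ \sigma_n-\sigma_{n_0}\ \leq\ \tfrac1{\lambda^\star-\varepsilon}\log\tfrac n{n_0}+\varepsilon.
\]

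It remains to pass from $\sigma$ to $\tau$ and to treat $n\leq n_0$. Since the increments of $k\mapsto M(\sigma_k)$ are i.i.d.\ Bernoulli$(\beta)$ we have $k_n/n\to1/\beta$ almost surely, hence $\log(k_n/k_{n_0})=\log(n/n_0)+o(1)$ uniformly over $n\geq n_0$ as $n_0\to\infty$. Substituting $(k_n,k_{n_0})$ for $(n,n_0)$ in the previous display (legitimate because $k_{n_0}\geq n_0-1\to\infty$ and $k\mapsto k_n$ is non-decreasing) and relabelling $\varepsilon$ gives the first inequality of the proposition. For $1\leq n\leq n_0$ with $n$ large, the second inequality is the first with the roles of $n$ and $n_0$ exchanged; for the finitely many small values of $n$ one bounds $\tau_n$ by a fixed random constant and applies the first inequality to $\tau_{n_0}$ with a strictly smaller $\varepsilon'$, the gap between the prefactors $1/(\lambda^\star\pm\varepsilon')$ and $1/(\lambda^\star\pm\varepsilon)$ absorbing the constant as $n_0\to\infty$. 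This is an almost-sure-eventually statement and in particular holds with high probability as $n_0\to\infty$.

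I expect the only real obstacle to be the accuracy one can afford. In the condensation phase $e^{-\lambda^\star t}N(t)$ is not known to converge (cf.\ Section~8), so one cannot localise the $\sigma_n$, and hence the $\tau_n$, to within $O(1)$ as one could in the fluid phase via Nerman's theorem; nothing sharper than $R_n=\lambda^\star n\,(1+o(1))$ is available. The argument above is arranged to use only this first-order law of large numbers from Theorem~\ref{th:cond}, and the price is exactly the $\pm\varepsilon$ loss in the prefactors — which is precisely what the proposition asserts.
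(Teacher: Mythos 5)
Your argument is correct and is essentially the paper's: both proofs reduce the claim to sums of independent exponential variables with rates of order $\lambda^\star j$, using the almost-sure convergence of the total birth rate divided by the relevant count (from Theorem~\ref{th:cond}) to sandwich the compensator, and finishing with a martingale estimate for the resulting exponential sums. The only difference is cosmetic: the paper applies the time change directly to the family-counting process $M$ (whose jump times are the $\tau_n$), localises with a stopping time, and quotes the Yule-process estimate of Lemma~\ref{lem:doob}(d), whereas you time-change the birth-event process and then convert indices via $k_n/n\to 1/\beta$ --- a correct but avoidable extra step.
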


\medskip

\begin{proof}
Recall that, when $t$ goes to infinity, $M(t)= e^{(\lambda^\star+o(1))t}$, implying that
$$
\tau_{n_0}=\frac {1+o(1)}{\lambda^\star} \log n_0
$$
which implies the estimate for arbitrarily fixed $n$ as $n_0$ goes to infinity.
It thus suffices to prove the statement for all $n, n_0\geq \kappa$ with high probability as $\kappa$ goes to infinity.
Fix  $\eps\in(0,\lambda^\star)$ and $\kappa\in\N$ and set $\lambda_\pm^\star:=\lambda^\star\pm\eps$.
The stochastic process $(M(s))_{s\geq 0}$ is a pure birth process with continuous compensator 
$$\gamma(s)=\int_0^s \beta N(u)  \int _0^1 x\, \Xi_{u}(dx)\,du.$$
 We consider the  stopping time  $S$ given by
$$
S=\inf\Bigl\{s\geq \tau_\kappa\colon  \frac{\beta N(s)}{ M(s)}\int _0^1 x\, \Xi_s(dx) \not \in [ \lambda ^\star_-, \lambda^\star_+]\Bigr\},
$$
and note that, by Theorem~\ref{th:cond}, $S$ is infinite with high probability as $\kappa\to\infty$.

Let $w_n:=\tau_{n+1}-\tau_n$ be the  inter-arrival times of $(M(s))_{s\geq 0}$. Observe that 
\begin{align*} 
\gamma(\tau_{n+1})-\gamma(\tau_n)= \int_{\tau_n}^{\tau_{n+1}} \beta N(s)  \int _0^1 x\, \Xi_s(dx)\, ds \in [\lambda^\star_-(\tau_{n+1}-\tau_n)n,  \lambda^\star_+ (\tau_{n+1}- \tau_{n})n]
\end{align*}
provided that $\tau_{n+1}\leq S$. Defining $$w_n^\pm:=\frac {\gamma(\tau_{n+1})-\gamma(\tau_n) }{\lambda^\star_\pm n},$$ we infer that $w^+_n\leq w_n\leq w^-_n$ for all $n$ such that 
$\tau_{n+1}\leq S$,  and the sequences $(w^\pm_n)_{n\in\N}$ consist of independent exponentials with respective parameter $\lambda^\star_\pm n$ which are the inter-arrival times of Yule processes $(Y_\pm(s))$ of respective rate $\lambda_\pm^\star$. By Lemma~\ref{lem:doob}, with high probability  as $\kappa\to\infty$, for all~$n,n_0\geq \kappa$,
$$
 \frac 1{\lambda^\star_\pm} \log \frac n{n_0}-\eps \leq T^\pm_n-T^\pm_{n_0}\leq \frac 1{\lambda^\star_\pm} \log \frac n{n_0} +\eps,
$$
where $(T^\pm_n)_{n\in\N}$ denotes the ordered sequence of jump times of $(Y_\pm(s))_{s\geq 0}$.
Hence, for all $n\geq n_0\geq \kappa$,
$$
\tau_n-\tau_{n_0}=\sum_{k=n_0}^{n-1} w_k \leq \sum_{k=n_0}^{n-1} w_k^- = T_n^-- T_{n_0}^- \leq \frac 1{\lambda^\star_-} \log \frac n{n_0}+\eps.
$$
Similarly, it follows that with high probability, for $\kappa\leq n\leq n_0$,
$$
\tau_n-\tau_{n_0}\leq T_n^+- T_{n_0}^+ \leq \frac 1{\lambda^\star_+} \log \frac n{n_0}+\eps.
$$
The converse bound follows in complete analogy.
\end{proof}

\section{Local convergence}

The aim of this section is to prove the following proposition.
\medskip

\begin{prop}\label{prop:Poisson_local_Gamma}
Under assumption (\ref{as}) one has convergence in distribution of the point processes
\[\Gamma_t=\sum_{n=1}^{M(t)} \delta_{(\tau_n-T(t),(t-\tau_n)(1-F_n), e^{-\gamma (t-T(t))}Z_n(t))}\]
to the Poisson point process with intensity
\[d \zeta(s,f,z)= \alpha f^{\alpha-1}  \lambda^{\star} e^{\lambda^{\star} s}  e^{-ze^{\gamma (f+s)}} e^{\gamma (f+s)}\, ds\, df\, dz,\]
where we endow the set of locally finite  measures on $(-\infty,\infty)\times [0,\infty)\times [0,\infty]$ with the topology of vague convergence.
\end{prop}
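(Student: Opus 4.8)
The plan is to establish vague convergence in distribution by proving convergence of Laplace functionals: for every continuous $g\geq0$ on $(-\infty,\infty)\times[0,\infty)\times[0,\infty]$ whose support is contained in a box $[-A,A]\times[0,B]\times[0,\infty]$, I would show
\[
\E\big[e^{-\langle\Gamma_t,g\rangle}\big]\;\longrightarrow\;\exp\Big(-\!\int(1-e^{-g})\,d\zeta\Big).
\]
Since $g$ is supported where $\tau_n-T(t)\in[-A,A]$, only families with index in a window $W(t)$ around $n(t)$ contribute and, because $T(t)=\tau_{n(t)}$ by definition of~$M$, Proposition~\ref{lem:approx_tau_n} applied with $n_0=n(t)$ confines $W(t)$, with high probability, to $\{n:\ \tfrac1{\lambda^\star}\log(n/n(t))\in[-A-\eps,A+\eps]\}$, and provides, on this event~$E_t$, the uniform estimates $\tau_n-T(t)=\bar s_n+O(\eps)$ and $t-\tau_n=t^\star-\bar s_n+o(\log t)$ over $n\in W(t)$, where $\bar s_n:=\tfrac1{\lambda^\star}\log(n/n(t))$ and $t^\star:=t-\tfrac1{\lambda^\star}\log n(t)$ are deterministic. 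All estimates below are carried out on~$E_t$, and $\eps\downarrow0$ is taken at the very end.

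The key step is to decouple family growth from the birth mechanism. Recall from the standard construction that the triples $(F_n,Y_n,\Pi_n)$ are i.i.d., that $Z_n(t)=Y_n(t-\tau_n)$, and that $\tau_n$ depends only on $(F_m,Y_m,\Pi_m)_{m<n}$. I would introduce the Yule martingale limit $A_n:=\lim_{s\to\infty}e^{-\gamma F_n s}Y_n(s)$, which by Lemma~\ref{lem:doob} is standard exponential, independent of $F_n$, and such that $(F_n,A_n)_{n\geq1}$ is an i.i.d.\ sequence. The exponent in the third coordinate satisfies the exact identity $\gamma F_n(t-\tau_n)-\gamma(t-T(t))=-\gamma\big((\tau_n-T(t))+(t-\tau_n)(1-F_n)\big)$, so, writing $s_n=\tau_n-T(t)$ and $f_n=(t-\tau_n)(1-F_n)$ for the first two coordinates, one has $e^{-\gamma(t-T(t))}Z_n(t)=A_n e^{-\gamma(s_n+f_n)}(1+o(1))$, the $o(1)$ being uniform over the contributing families by Lemma~\ref{lem:doob}(b)--(c): these have $f_n\leq B$, hence $F_n\to1$, and $t-\tau_n\sim t$, so each Yule error is of order $e^{-ct}$ (with a quantitative rate such as $\E[(e^{-\lambda s}Y(s)-A)^2]=e^{-\lambda s}$) and survives a union bound over the $O(n(t))$ indices in $W(t)$. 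Hence, on $E_t$, each summand $g\big(s_n,f_n,e^{-\gamma(t-T(t))}Z_n(t)\big)$ equals, up to $o(1)$ (by uniform continuity of $g$ on the compact box, together with $f_n=t^\star(1-F_n)+o(1)$), the deterministic function $\widetilde g_{n,t}(F_n,A_n):=g\big(\bar s_n,\,t^\star(1-F_n),\,A_n e^{-\gamma(\bar s_n+t^\star(1-F_n))}\big)$ of the fresh pair $(F_n,A_n)$. A short sandwich argument ($\P(E_t^c)\to0$ and $|e^{-x}-e^{-y}|\leq|x-y|$) then gives $\E[e^{-\langle\Gamma_t,g\rangle}]=\E\big[\exp\big(-\!\sum_{n\in W(t)}\widetilde g_{n,t}(F_n,A_n)\big)\big]+o(1)$, which factorises as $\prod_{n\in W(t)}\E[e^{-\widetilde g_{n,t}(F_n,A_n)}]+o(1)$ by independence of the pairs.

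It remains to identify the intensity. Since $g$ is supported in $f\leq B$, only the rare event $1-F_n\leq B/t^\star$ feeds $\widetilde g_{n,t}$, and by assumption~(\ref{as}) and Karamata's theorem $n(t)$ times the law of $t^\star(1-F_n)$ converges vaguely to $\alpha f^{\alpha-1}\,df$ (using $n(t)\sim t^\alpha/\ell(1/t)$, $t^\star\sim t$, and the uniform convergence theorem for $\ell$). Integrating out the fresh pair then gives $\E[1-e^{-\widetilde g_{n,t}(F_n,A_n)}]=\tfrac1{n(t)}\,h(\bar s_n)\,(1+o(1))$, where $h(s):=\int_0^\infty\!\!\int_0^\infty\big(1-e^{-g(s,f,ae^{-\gamma(s+f)})}\big)\,\alpha f^{\alpha-1}e^{-a}\,da\,df$, so that $\prod_{n\in W(t)}\big(1-\tfrac1{n(t)}h(\bar s_n)(1+o(1))\big)=\exp\big(-\tfrac1{n(t)}\sum_{n\in W(t)}h(\bar s_n)+o(1)\big)$. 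As $n$ increases by one, $\bar s_n$ increases by $\tfrac1{\lambda^\star n}$, so the number of indices with $\bar s_n\in[s,s+ds]$ is $\lambda^\star n(t)e^{\lambda^\star s}\,ds$, and the sum is a Riemann sum for $\int h(s)\lambda^\star e^{\lambda^\star s}\,ds$. A final substitution $z=ae^{-\gamma(s+f)}$ turns $\int\!\!\int\!\!\int\big(1-e^{-g(s,f,ae^{-\gamma(s+f)})}\big)\,\alpha f^{\alpha-1}\lambda^\star e^{\lambda^\star s}e^{-a}\,da\,df\,ds$ into $\int(1-e^{-g})\,d\zeta$ with $\zeta$ as claimed, completing the proof.

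The main obstacle, and the reason the a priori bounds of Section~\ref{sec:approx_tau_n} are needed, is precisely that $\tau_n$ and the reference time $T(t)=\tau_{n(t)}$ are \emph{not} independent of the Yule processes $Y_m$ that govern the family sizes, so the decoupling above is only approximate; Proposition~\ref{lem:approx_tau_n} is what makes it work, by pinning the birth times down to the deterministic $\bar s_n$ on the high-probability event $E_t$, after which the genuine independence of the pairs $(F_n,A_n)$ can be exploited. The rest is bookkeeping that still has to be handled carefully: uniformity of the regular-variation estimates over $f\in(0,B]$ (Potter bounds near $f=0$), the uniform Yule approximation together with its union bound, and the verification that families outside $[-A,A]\times[0,B]$ --- in particular the many early families, whose normalised sizes are large --- do not affect compactly supported test functions, which is exactly why the statement is formulated on the non-compactified space.
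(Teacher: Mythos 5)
Your proposal is correct and rests on exactly the same pillars as the paper's argument: the deterministic approximation $\tau_n \approx T(t)+\frac1{\lambda^\star}\log(n/n(t))$ supplied by Proposition~\ref{lem:approx_tau_n}, the replacement of the rescaled family sizes by the i.i.d.\ standard-exponential Yule martingale limits (independent of the fitnesses by the standard construction), and the regular-variation computation producing $\alpha f^{\alpha-1}\,df$ together with the exponential spacing $\lambda^\star e^{\lambda^\star s}\,ds$ of birth indices. The organisation differs. The paper first introduces the fully decoupled process $\Psi_t^\star$ with coordinates $\tau_n^\star(t)-T(t)$, $t(1-F_n)$, $\xi_n$, proves its convergence via Kallenberg's criterion (avoidance probabilities and expected counts on boxes, Lemma~\ref{lem:cv_psi_star}), shows separately that $\Psi_t$ is close to $\Psi_t^\star$ against Lipschitz test functions (Lemma~\ref{lem:approx_psi}), and only then obtains $\Gamma_t$ as the image of $\Psi_t$ under the continuous map $(s,f,z)\mapsto(s,f,e^{-\gamma(s+f)}z)$ --- the map whose exact algebraic form you instead use inline via the identity $\gamma F_n(t-\tau_n)-\gamma(t-T(t))=-\gamma(s_n+f_n)$. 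Your single Laplace-functional computation fuses these three steps; it buys a shorter derivation and an explicit limiting Laplace transform, at the price of carrying all error terms inside one exponential.

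The one place where your write-up is thinner than it needs to be is the passage from ``each summand equals $\widetilde g_{n,t}(F_n,A_n)$ up to $o(1)$'' to $\langle\Gamma_t,g\rangle=\sum_{n\in W(t)}\widetilde g_{n,t}+o(1)$: the window $W(t)$ contains on the order of $n(t)$ indices, so a uniform per-term $o(1)$ does not by itself control the sum of the errors. You must add that only the indices with $t(1-F_n)\leq B+1$ (say) produce a nonzero term on either side, and that their number is tight --- its expectation is $|W(t)|\cdot O(1/n(t))=O(1)$ --- so that the accumulated error is (tight count)$\times o(1)$. This is precisely the role played by the set $\bar I(t)$ and the convergence in distribution of $|\bar I(t)|$ in the paper's Lemma~\ref{lem:approx_psi}. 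With that observation made explicit, your argument goes through.
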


\vspace{.5\baselineskip}
Proposition \ref{prop:Poisson_local_Gamma} is a straightforward consequence of the following result.
\medskip

\begin{prop}\label{prop:Poisson_local}
Under assumption (\ref{as}) we have vague convergence of the point process
\[\Psi_t=\sum_{n=1}^{M(t)} \delta_{(\tau_n-T(t),(t-\tau_n)(1-F_n), e^{-\gamma  F_n(t-\tau_n)}Z_n(t))}\]
to the Poisson point process with intensity
\[d \zeta^{\star} (s,f,z)= \alpha f^{\alpha-1}  \lambda^{\star} e^{\lambda^{\star} s}  e^{-z}\, ds\, df\, dz\]
on $(-\infty,\infty)\times [0,\infty)\times [0,\infty]$.
\end{prop}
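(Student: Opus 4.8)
The plan is to prove vague convergence of $\Psi_t$ by the standard Laplace-functional criterion for Poisson limits of point processes (e.g.\ Kallenberg): it suffices to show that for every compactly supported continuous $g\colon(-\infty,\infty)\times[0,\infty)\times[0,\infty]\to[0,\infty)$ one has $\E\exp(-\langle\Psi_t,g\rangle)\to\exp(-\int(1-e^{-g})\,d\zeta^\star)$, or equivalently — since the points are independent across families given the fitnesses $(F_n)$ and the arrival times $(\tau_n)$ — to control the ``avoidance'' probabilities $\P(\Psi_t(B)=0)$ together with convergence of intensities on a generating class of rectangles $B=(a,b)\times(c,d)\times(u,\infty]$. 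The key structural point is that the third coordinate $e^{-\gamma F_n(t-\tau_n)}Z_n(t)$ is, conditionally on $F_n$ and $\tau_n$, the value at time $t-\tau_n$ of the rescaled Yule martingale of Lemma~\ref{lem:doob}, which converges to a standard exponential; so the last marginal is asymptotically $\mathrm{Exp}(1)$, producing the factor $e^{-z}\,dz$ in $\zeta^\star$, and this convergence is uniform enough (again by Lemma~\ref{lem:doob}(b)--(c)) over the relevant range of $t-\tau_n$, which is large because the dominant families are born near $T(t)$.

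The first step is to compute, for a rectangle $B=(a,b)\times(c,d)\times(u,\infty]$, the expected number of points of $\Psi_t$ in $B$. A family $n$ contributes iff $\tau_n-T(t)\in(a,b)$, $(t-\tau_n)(1-F_n)\in(c,d)$, and $e^{-\gamma F_n(t-\tau_n)}Z_n(t)>u$. Conditioning on $\tau_n$ and using Proposition~\ref{lem:approx_tau_n} to pin down how many families have birth time in the window $T(t)+(a,b)$ — there are $\asymp e^{\lambda^\star s}$ of them accumulating like $\lambda^\star e^{\lambda^\star s}\,ds$ after recentring at $T(t)$ — I would then use assumption (\ref{as}) to evaluate the fitness contribution: the event $(t-\tau_n)(1-F_n)\in(c,d)$ has probability $\mu(1-\tfrac{d}{t-\tau_n},1-\tfrac{c}{t-\tau_n})$, and since $t-\tau_n\sim t-T(t)$ and $n(t)=\lceil 1/\mu(1-t^{-1},1)\rceil$ while $T(t)$ is defined via $M(T(t))\geq n(t)$, regular variation turns this (after the normalisation built into the definition of $T(t)$ and $\Psi_t$) into $(d^\alpha-c^\alpha)=\int_c^d \alpha f^{\alpha-1}\,df$; finally the Yule factor gives $\P(e^{-\gamma F_n(t-\tau_n)}Z_n(t)>u\mid\cdots)\to e^{-u}$. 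Multiplying, $\E\Psi_t(B)\to\zeta^\star(B)$.

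The second step is to upgrade intensity convergence to full Poisson convergence. Since, conditionally on $(F_n,\tau_n)_n$, the third coordinates of distinct families are independent, the contributions of distinct families to $\Psi_t(B)$ are conditionally independent indicator-type random variables, each of small conditional probability, so $\Psi_t(B)$ is approximately Poisson with the above mean; a second-moment computation (the conditional variance matches the conditional mean up to negligible terms, because each family contributes at most one point and the families are conditionally independent) gives a Poisson law in the limit, and the analogous computation for a finite union of disjoint rectangles gives the required joint independence. One must also check there is no loss of mass ``at infinity'' within the local space: the first coordinate ranges over all of $(-\infty,\infty)$ without compactification here, so this is handled simply by restricting to compact $B$, and mass escaping to $z=\infty$ in the last coordinate is allowed since the target space includes $[0,\infty]$.

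The main obstacle I expect is the second step's control of the randomness in the birth times $\tau_n$: unlike a clean Poisson or Yule setting, the $\tau_n$ are only known up to the $\eps$-accurate two-sided bounds of Proposition~\ref{lem:approx_tau_n}, so one must show that replacing $\tau_n-T(t)$ by its deterministic proxy $\tfrac1{\lambda^\star}\log(n/n(t))$ (and $t-\tau_n$ by $t-T(t)$) perturbs each of the three coordinates by amounts that vanish, uniformly over the $O(e^{\lambda^\star b})$ families in the window — in particular one needs that the $\eps$ in Proposition~\ref{lem:approx_tau_n} can be sent to zero after $t\to\infty$, and that the exponential sensitivity of $Z_n(t)=Y_n(t-\tau_n)$ to $\tau_n$ is absorbed by the Yule martingale convergence rather than amplified. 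Coupling $(M(s))$ to exact Yule processes sandwiching it, exactly as in the proof of Proposition~\ref{lem:approx_tau_n}, together with Lemma~\ref{lem:doob}(c), is the tool I would use to close this gap.
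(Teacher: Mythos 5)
Your plan follows essentially the same route as the paper: prove Poisson convergence via Kallenberg's avoidance/intensity criterion for the decoupled process obtained by replacing $\tau_n$ with the deterministic proxy $T(t)+\frac{1}{\lambda^\star}\log(n/n(t))$ and the rescaled sizes with their Yule-martingale limits $\xi_n$, and then transfer back to $\Psi_t$ using Proposition~\ref{lem:approx_tau_n} and Lemma~\ref{lem:doob}. The one detail to watch is that the time window $\tau_n-T(t)\in(a,b)$ contains order $n(t)\to\infty$ families, so the uniform control of the perturbation must be restricted to the asymptotically Poisson-many families that also satisfy the fitness constraint $t(1-F_n)=O(1)$; this thinning is exactly how the paper's Lemma~\ref{lem:approx_psi} closes the argument.
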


\begin{proof}[Proof of Proposition \ref{prop:Poisson_local_Gamma}]
This follows directly from the fact that the point process $\Gamma_t$ is the image of $\Psi_t$ by the continuous function
$(s, f, z) \mapsto (s, f, e^{-\gamma (s+f)}z),$
and that $\zeta$ is the image of $\zeta^{\star}$ by the same continuous function.
\end{proof}

The proof of Proposition~\ref{prop:Poisson_local} consists of the following two steps. 
We approximate $\Psi_t$ by a point process 
\[\Psi^{\star}_t 
= \sum_{n \in \b N}  \,\delta_{(\tau_n^{\star}(t)-T(t), t(1-F_n), \xi_n)},\]
where the birth times $\tau_n$ are replaced by approximate birth times
\[\tau_n^{\star}(t) = T(t) + \frac{\log \nicefrac{n}{n(t)}}{\lambda^{\star}}\]
and the rescaled family sizes $ e^{-\gamma  F_n (t-\tau_n)}Z_n(t)$ by their limits
$$\xi_n=\lim_{u\to\infty} e^{-\gamma  F_n (u-\tau_n)}Z_n(u).$$
In the approximating process $\Psi^{\star}_t$
the components are decoupled, which makes it easier to study.
We prove that
\begin{itemize}
\item[(1)] this approximation process converges vaguely to the Poisson point process of intensity $\zeta^{\star}$, and
\item[(2)] $\Psi^{\star}_t$ is close enough to $\Psi_t$ to imply Proposition~\ref{prop:Poisson_local}.
\end{itemize}
The two steps correspond to the two lemmas below.
\bigskip

%

\begin{lem}\label{lem:cv_psi_star} $(\Psi_t^{\star})_{t\geq 0}$ converges vaguely on $[-\infty,\infty)\times [0,\infty)\times [0,\infty]$ to the Poisson point process with intensity~$\zeta^{\star}$. 
\end{lem}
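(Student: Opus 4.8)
The plan is to show that $\Psi_t^{\star}$ converges vaguely to the Poisson point process with intensity $\zeta^\star(ds\,df\,dz)=\alpha f^{\alpha-1}\lambda^\star e^{\lambda^\star s}e^{-z}\,ds\,df\,dz$ on $E:=[-\infty,\infty)\times[0,\infty)\times[0,\infty]$. The decisive structural observation is that in $\Psi_t^{\star}$ the three coordinates of the $n$th atom are \emph{essentially decoupled}: the first coordinate $\tau_n^{\star}(t)-T(t)=\lambda^{\star,-1}\log(n/n(t))$ is deterministic given $n$, the second coordinate $t(1-F_n)$ depends only on $F_n$, and the third coordinate $\xi_n=\lim_{u\to\infty}e^{-\gamma F_n(u-\tau_n)}Z_n(u)$ is, by Lemma~\ref{lem:doob}(b), standard exponential \emph{conditionally on $F_n$} (the Yule process governing the $n$th family has rate $\gamma F_n$, so $e^{-\gamma F_n s}Y_n(s)\to\xi_n\sim\mathrm{Exp}(1)$), and moreover $\xi_n$ is independent of $F_n$ in law since its distribution does not depend on the rate. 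Thus the $\{(F_n,\xi_n)\}_{n\ge1}$ are i.i.d.\ with $F_n\sim\mu$ and $\xi_n\sim\mathrm{Exp}(1)$ independent, while the first coordinate is a deterministic function of the index~$n$.

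**Reduction to a convergence-of-intensities computation.** Since the atoms of $\Psi_t^\star$ are not i.i.d.\ (they are indexed by $n$, with the first coordinate depending on $n$), but the pairs $(F_n,\xi_n)$ are i.i.d., I would verify Poisson convergence via the standard criterion for vague convergence of point processes on a locally compact second countable Hausdorff space (e.g.\ Kallenberg): it suffices to check that for every relatively compact rectangle $B=[a,b)\times[c,d)\times[e,\infty]\subset E$ (these generate the vague topology and are a convergence-determining class here) one has $\mathbb E[\Psi_t^\star(B)]\to\zeta^\star(B)$ and $\mathbb P(\Psi_t^\star(B)=0)\to e^{-\zeta^\star(B)}$, together with the analogous statement for finite unions of disjoint such rectangles — equivalently, convergence of Laplace functionals. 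Concretely, fix such a $B$. The set of indices $n$ with $\tau_n^{\star}(t)-T(t)\in[a,b)$ is exactly $n(t)e^{\lambda^\star a}\le n< n(t)e^{\lambda^\star b}$, an interval of roughly $n(t)(e^{\lambda^\star b}-e^{\lambda^\star a})$ integers; call its cardinality $K_t$. For each such $n$, the atom lands in $B$ iff $t(1-F_n)\in[c,d)$ and $\xi_n\ge e$, an event of probability $p_t:=\mathbb P(t(1-F)\in[c,d))\cdot e^{-e}$. By assumption~\eqref{as}, $\mu(1-c/t,1-?)$-type probabilities satisfy $t^\alpha\mu(1-x/t,1)\to$ const, so $\mathbb P(c\le t(1-F)<d)=\mu(1-d/t,1)-\mu(1-c/t,1)=t^{-\alpha}\ell(1/t)(d^\alpha-c^\alpha)(1+o(1))$; meanwhile $n(t)=\lceil 1/\mu(1-1/t,1)\rceil\sim t^\alpha/\ell(1/t)$, so $K_t p_t\to (e^{\lambda^\star b}-e^{\lambda^\star a})(d^\alpha-c^\alpha)e^{-e}$. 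A direct check confirms this equals $\zeta^\star(B)=\int_a^b\lambda^\star e^{\lambda^\star s}ds\int_c^d\alpha f^{\alpha-1}df\int_e^\infty e^{-z}dz$. Since the atoms in $B$ are the successes of $K_t\to\infty$ independent Bernoulli$(p_t)$ trials with $p_t\to0$ and $K_tp_t\to\zeta^\star(B)$, $\Psi_t^\star(B)$ converges to Poisson$(\zeta^\star(B))$ by the classical Poisson limit theorem; the joint statement over finitely many disjoint rectangles follows because the trials are then independent across rectangles (distinct index ranges for the first coordinate, distinct events for a fixed $n$), giving independent Poisson limits, hence the Laplace functional converges to that of the Poisson process with intensity $\zeta^\star$.

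**Handling the boundary/compactification and the point $-\infty$.** Two technical points need care. First, vague convergence on $E$ requires controlling only relatively compact sets, and a set is relatively compact in $[-\infty,\infty)\times[0,\infty)\times[0,\infty]$ iff it is bounded away from $s=+\infty$, from $f=+\infty$, and from $z=0$; so the rectangles above (with $b<\infty$, $d<\infty$, $e>0$) indeed exhaust the relevant test sets, and the compactification at $s=-\infty$ causes no difficulty because $\zeta^\star$ puts finite mass on $[-\infty,b)\times[c,d)\times[e,\infty]$ (the $s$-integral $\int_{-\infty}^b\lambda^\star e^{\lambda^\star s}ds=e^{\lambda^\star b}<\infty$ converges) and correspondingly the index set $\{n: \tau_n^\star(t)-T(t)<b\}=\{1,\dots,\lfloor n(t)e^{\lambda^\star b}\rfloor\}$ has the right asymptotic count — one should note that the lower truncation of $n$ at $1$ contributes, via $n=1$, a first coordinate $-\lambda^{\star,-1}\log n(t)\to-\infty$, consistent with the compactification. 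Second, one must confirm that $n(t)\to\infty$ and $T(t)$ is a.s.\ finite for the objects to make sense; $n(t)\to\infty$ is immediate from $\mu(1-1/t,1)\to0$, and $T(t)<\infty$ a.s.\ follows since $M(s)\to\infty$ a.s.

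**Main obstacle.** The routine part is the asymptotics of the expected number of atoms in a box, which is a direct regular-variation computation. The genuinely substantive point — and the one I would spend care on — is making the decoupling rigorous: specifically, justifying that for a \emph{fixed} $t$ the family of triples $(\tau_n^\star(t)-T(t),\,t(1-F_n),\,\xi_n)_{n\in\mathbb N}$ has exactly the claimed joint distribution, namely that $T(t)$, being a stopping time for $(M(s))$, is measurable with respect to data that leaves the $(F_n,\xi_n)$ i.i.d.\ and that $\xi_n$ (an almost-sure limit of the $n$th family's rescaled Yule process) is independent of $F_n$ with the stated Exp$(1)$ law regardless of the rate $\gamma F_n$. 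Once this independence structure is in hand, the remaining argument is the standard Poissonization of an array of rare independent events and poses no real difficulty; the delicate interplay between $\tau_n$ and the rest of the randomness — which is the crux of the whole paper — is precisely what has been \emph{removed} by passing to the approximating process $\Psi_t^\star$, and is dealt with separately in the companion lemma comparing $\Psi_t^\star$ to $\Psi_t$.
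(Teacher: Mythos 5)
Your proposal is correct and follows essentially the same route as the paper: both verify Kallenberg's two conditions ($\mathbb E[\Psi_t^\star(B)]\to\zeta^\star(B)$ and $\mathbb P(\Psi_t^\star(B)=0)\to e^{-\zeta^\star(B)}$ on boxes), exploiting that the first coordinate $\frac{1}{\lambda^\star}\log(n/n(t))$ is deterministic, that $(F_n,\xi_n)$ are i.i.d.\ with $\xi_n\sim\mathrm{Exp}(1)$ independent of $F_n$, and the regular-variation asymptotics $\mu(1-\nicefrac{d}{t},1-\nicefrac{c}{t})/\mu(1-\nicefrac1t,1)\to d^\alpha-c^\alpha$ together with $r(t)\sim(e^{\lambda^\star b}-e^{\lambda^\star a})/\mu(1-\nicefrac1t,1)$. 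Your closing worry about the stopping time $T(t)$ is moot, since it cancels exactly in $\tau_n^\star(t)-T(t)$, as you yourself note at the outset.
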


\begin{proof}
We apply Kallenberg's theorem, see~\cite[Proposition~3.22]{Resnick}. Since $\zeta^\star$ is diffuse,
to prove Lemma~\ref{lem:cv_psi_star}, it is enough to show that, for every
precompact relatively open box $B\subset [-\infty,\infty)\times [0,\infty)\times [0,\infty]$,
\begin{enumerate}[(a)]
\item $\mathbb P(\Psi^{\star}_t(B)=0) \to 
\exp(-\zeta^{\star}(B))$, as $t\uparrow\infty$, and
\item $\mathbb E \Psi^{\star}_t(B) \to \zeta^{\star}(B)$, as $t\uparrow\infty$.
\end{enumerate}
It suffices to consider nonempty  boxes $B$ of the form $(a_0,a_1)\times (b_0,b_1)\times (c_0,c_1)$ since, almost surely, neither  the point processes $\Gamma_t$ nor the limiting Poisson process put points on the boundary $\partial ([-\infty,\infty)\times [0,\infty)\times [0,\infty])$. 
Here $a_0=-\infty$ and $c_1=\infty$ is an allowed choice.
Note that 
\[\zeta^{\star}(B)= 
(e^{\lambda^{\star}a_1}-e^{\lambda^{\star}a_0})
(b_1^{\alpha}-b_0^{\alpha})
(e^{-c_0}-e^{-c_1}).\]
(a) By the construction of the probability space at  the beginning of Section~\ref{sec:approx_tau_n},  $(F_n, \xi_n)_{n\geq 1}$ is a sequence of i.i.d. random variables with each $F_n$ being independent of $\xi_n$. Hence,
\begin{align*}\mathbb P(\Psi^{\star}_t(B) = 0)
&= \prod_{n(t)e^{\lambda^{\star}a_0}< n < n(t)e^{\lambda^{\star}a_1}}
\mathbb P\big(t(1-F_n)\notin (b_0,b_1) \text{ or }\xi_n \notin (c_0, c_1)\big)\\
&= \bigl(1- \P\big(t(1-F_1)\in (b_0,b_1)\big) \,\P\big(\xi_1 \in (c_0, c_1)\big)\bigr)^{r_{a_0,a_1}(t)},
\end{align*}
where $r_{a_0,a_1}(t)$ denotes the number of elements $n\in\N$ with $n(t)e^{\lambda^{\star}a_0}< n < n(t)e^{\lambda^{\star}a_1}$. \pagebreak[3]

We note that, as $t\to\infty$, we have $r_{a_0,a_1}(t)\sim  (e^{\lambda^\star a_1}-e^{\lambda^\star a_0})/\mu(1-t^{-1},1)$ and,  in view of Assumption~\ref{as}, 
\begin{equation}\label{eq:RV_frac}
\frac{\mu((1-\nicefrac{b_1}{t}, 1-\nicefrac{b_0}{t}))}{\mu(1-\nicefrac1t, 1)} \sim b_1^{\alpha}-b_0^{\alpha} \quad \text{ as }t\uparrow\infty.
\end{equation}
Further $\P(\xi_1\in (c_0,c_1))=e^{-c_0}-e^{-c_1}$.
Thus, as $t\to\infty$,
\begin{align*}
\mathbb P(\Psi^{\star}_t(B) = 0)
&= \left(1-\mu(1-\nicefrac{b_1}{t}, 1-\nicefrac{b_0}{t})\left(e^{-c_0}-e^{c_1}\right)
\right)
^{r_{a_0,a_1}(t)}\\
&\sim \exp\left(- (e^{\lambda^{\star}a_1}-e^{\lambda^{\star}a_0}) \ \frac{\mu((1-\nicefrac{b_1}{t}, 1-\nicefrac{b_0}{t}))}{\mu(1-\nicefrac1t, 1)}\ \left(e^{-c_0}-e^{c_1}\right)
\right)\\
&\to \exp(-\zeta^{\star}(B)).
\end{align*}

(b) To compute the limit of $\b E[\Psi^{\star}_t(B)]$ we apply the asymptotic estimates from above,
\begin{align*}
\b E[\Psi^{\star}_t(B)]&= 
\sum_{n(t) e^{\lambda^{\star}a_0}< n< n(t)e^{\lambda^{\star}a_1}} 
\mu([1-\nicefrac{b_1}{t},1-\nicefrac{b_0}{t}]) \,\b P(\xi_n \in[c_1,c_2])\\
&= r_{a_0,a_1}(t)\, \mu([1-\nicefrac{b_1}{t},1-\nicefrac{b_0}{t}]) \,\b P(\xi_1 \in[c_1,c_2])\to \zeta^\star(B).
\end{align*}
\end{proof}

\begin{lem}\label{lem:approx_psi}
For all Lipschitz continuous, compactly supported functions $f:(-\infty,\infty)\times [0,\infty)\times [0,\infty] \to \mathbb R$,
\[\left|\int f\, d\Psi_t^{\star} - \int f\,d \Psi_t\right| \to 0 \text{ in probability, as }t\uparrow\infty.\]
\end{lem}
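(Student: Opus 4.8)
The plan is to compare $\Psi_t$ and $\Psi^\star_t$ point by point, using that both point processes are built on the same probability space from the i.i.d.\ sequence $(F_n,Y_n,\Pi_n)$. Since $f$ is Lipschitz and compactly supported, only indices $n$ with $\tau_n - T(t) \in [a,b]$ for some fixed finite window $[a,b]$ contribute to either integral, and Proposition~\ref{lem:approx_tau_n} tells us that with high probability this corresponds to a range of indices $n$ comparable to $n(t)$, more precisely $n \in [n(t)e^{(\lambda^\star - \eps)a}, n(t)e^{(\lambda^\star+\eps)b}]$ or so. For each such $n$ I would bound $|f(\text{point of }\Psi_t\text{ for }n) - f(\text{point of }\Psi^\star_t\text{ for }n)|$ by the Lipschitz constant of $f$ times the sum of the discrepancies in the three coordinates, and then control the total discrepancy summed over the $O(n(t))$ relevant indices.

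The three coordinatewise errors are handled as follows. For the first coordinate, the error is $|\tau_n - \tau^\star_n(t)|$; by Proposition~\ref{lem:approx_tau_n} (applied with $n_0 = n(t)$, using $T(t)=\tau_{n(t)}$ up to a rounding) this is $o(1)$ uniformly over the relevant range of $n$, with high probability — in fact one gets a bound like $\eps + \tfrac{\eps}{\lambda^\star}|\log(n/n(t))|$ which is uniformly small on the window. For the second coordinate, the error is $|(t-\tau_n)(1-F_n) - t(1-F_n)| = \tau_n(1-F_n)$; on the relevant event $\tau_n \sim \tfrac\alpha{\lambda^\star}\log t$ and the points that matter have $(t-\tau_n)(1-F_n)$ bounded, hence $1-F_n = O(1/t)$, so $\tau_n(1-F_n) = O(\log t / t) \to 0$. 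For the third coordinate, the error is $|e^{-\gamma F_n(t-\tau_n)}Z_n(t) - \xi_n|$ where $\xi_n = \lim_{u\to\infty} e^{-\gamma F_n(u-\tau_n)}Z_n(u)$ is the martingale limit of the Yule process $Y_n$ scaled appropriately (note $Z_n(t) = Y_n(t-\tau_n)$ and $Y_n$ has rate $\gamma F_n$, so $e^{-\gamma F_n(t-\tau_n)}Z_n(t)$ is exactly the Doob martingale of Lemma~\ref{lem:doob}(a)); since $t - \tau_n \sim (1-\tfrac\alpha{\lambda^\star})t \to \infty$ for the relevant indices, each such term tends to $0$ almost surely by Lemma~\ref{lem:doob}(b), and one needs a uniform-in-$n$ version of this.

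The main obstacle is precisely this last point: turning the individual almost-sure convergences $e^{-\gamma F_n(t-\tau_n)}Z_n(t) \to \xi_n$ into control that is uniform over roughly $n(t)$ indices, since $n(t)\to\infty$. The idea is to use a second-moment or maximal-inequality argument: by Lemma~\ref{lem:doob}(c) the random variables $\sup_{u\ge s}|e^{-\gamma F_n(u-\tau_n)}Z_n(u) - e^{-\gamma F_n(s-\tau_n)}Z_n(s)|$ have moments controlled uniformly in $n$ (the Yule rates $\gamma F_n$ are bounded by $\gamma$), so fixing a large constant $A$ and looking at the fluctuation of each scaled Yule process after time $t - \tau_n \ge A$ gives, via a union bound over the $O(n(t))$ relevant $n$ together with a Chebyshev or exponential-moment estimate, that the maximum discrepancy over the window is small with high probability once $A$ (hence $t$) is large. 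One must also handle the indices near the edge of the window and the contribution of the third coordinate being near $\infty$ (where $f$ is still continuous by the one-point compactification), but these are routine given the exponential tail of the martingale limits. Combining the three coordinatewise bounds with the Lipschitz bound and summing over the $O(n(t))$ indices — each contributing an error that is $o(1/n(t))$ on the good event — yields $|\int f\,d\Psi^\star_t - \int f\,d\Psi_t| \to 0$ in probability.
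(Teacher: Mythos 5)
Your decomposition into three coordinatewise errors controlled by the Lipschitz constant is exactly the paper's starting point, and your treatment of each individual error (Proposition~\ref{lem:approx_tau_n} for $|\tau_n-\tau_n^\star(t)|$, the bound $\tau_n(1-F_n)=O(\log t/t)$ for the second coordinate, and the Yule martingale convergence for the third) matches the paper's. But there is a genuine gap in how you count the contributing indices, and it is fatal to the argument as you set it up. You take the relevant index set to be all $n$ with $\tau_n-T(t)$ in the time window, which has cardinality of order $n(t)\to\infty$, and you then require each per-index error to be $o(1/n(t))$. Neither the first nor the second coordinate error satisfies this: the first is only $O(\eps)$ uniformly (Proposition~\ref{lem:approx_tau_n} gives no better than an $\eps$-sized deterministic sandwich), and the second is $O(\log t/t)$, whereas $1/n(t)\sim t^{-\alpha}\ell(1/t)$ with $\alpha>1$, so $n(t)\cdot\log t/t\to\infty$. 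Summed over $O(n(t))$ indices your bound diverges.

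The missing idea is that the compact support of $f$ in the \emph{second} coordinate also restricts the index set: a family contributes only if $(t-\tau_n)(1-F_n)\leq b$ (or $t(1-F_n)\leq b$ for $\Psi^\star_t$), i.e.\ only if $1-F_n=O(1/t)$ — a condition you invoke to bound the second-coordinate error but do not feed back into the count. Among the $\sim n(t)$ families in the time window, the number satisfying $t(1-F_n)\leq 2b$ is asymptotically Poisson with bounded mean (this is exactly the computation in Lemma~\ref{lem:cv_psi_star}), so the sum has $O_P(1)$ terms, not $O(n(t))$. The paper formalizes this via the set $\bar I(t)=\{n:|\tau^\star_n(t)-T(t)|\leq 2a,\ t(1-F_n)\leq 2b\}$, shows $I(t)\subset\bar I(t)$ on a high-probability event, and then only needs each coordinatewise error to be $o(1)$ uniformly over a tight index set. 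For the third coordinate the paper additionally uses that the fluctuation variables $R_n(\gamma t/4)$ are independent of $(F_n)$ and hence of $\bar I(t)$, so $\sum_{n\in\bar I(t)}R_n(\gamma t/4)\to0$ in probability without any union bound over $n(t)$ indices. (Your union-bound idea for the third coordinate could be salvaged because the martingale fluctuations after time $\gamma t/4$ have exponentially small tails, but this does not rescue the first two coordinates, and it is unnecessary once the index set is correctly identified as tight.)
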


\begin{proof}
Let $f$ be a Lipschitz continuous function supported on  
$K= [-a,a]\times [0,b]\times [0,\infty]$ for $a,b\geq 1$.
We have
\begin{align}\begin{split}\label{eq0809-1}
&\Bigl|\int f\, d\Psi_t -\int f\, d\Psi_t^{\star}\Bigr|\\
&\hspace{1cm}\leq \sum_{n=1}^{M(t)} \left|f\left(\tau_n - T(t), (t-\tau_n)(1-F_n), e^{-\gamma F_n(t-\tau_n)}Z_n(t)\right)
- f\Big(\tau_n^\star(t)-T(t), t(1-F_n),\xi_n\Big)\right|\\
&\hspace{1cm}\leq c \sum_{n\in I(t)} \bigl( | \tau_n - \tau_n^\star(t)|+  \tau_n (1-F_n) + |  e^{-\gamma F_n(t-\tau_n)}Z_n(t)-\xi_n|\bigr) ,
\end{split}\end{align}
where $c$ is the Lipschitz constant of the function $f$ and
 $I(t)$ is the random set of indices $n\in \mathbb N$ such that
\begin{equation}\tag{a}\label{eq:condition1}
|\tau_n - T(t)|\leq a \ \text{ and } \ (t-\tau_n)(1-F_n)\leq b
\end{equation}
or
\begin{equation}\tag{b}\label{eq:condition2}
|\tau_n^{\star}(t)-T(t)|\leq a \ \text{ and } \ t(1-F_n)\leq b. 
\end{equation}

For $\eps\in (0,1/2)$ we denote by $\c T_\eps(t)$ the event that the following properties hold,
\begin{itemize}
\item $|\tau_n-\tau_n^\star(t)|\leq \eps (1+| \tau^\star_n(t) -T(t)|)$ for all $n\in \N$ and
\item $T(t)\leq t/3$.
\end{itemize}
We note that, in view of Proposition~\ref{lem:approx_tau_n}, $\c T_\eps(t)$ holds with high probability as $t\to\infty$. 

Let now
$\bar I(t):=\{n\in\N\colon |\tau^\star_n(t)-T(t)|\leq 2a, t(1-F_n)\leq 2b\}.$
We show that for $t\geq 6a$ one has
$I(t)\subset \bar I(t)$ on $\c T_\eps(t)$.
Suppose that $n\in I(t)$ and that $\c T_\eps(t)$ holds. 
It suffices to consider the case where condition~(\ref{eq:condition1}) is satisfied. Then
$b\geq (t-\tau_n)(1-F_n)\geq (t-T(t)-a) (1-F_n)\geq t(1-F_n)/2$, which proves that the second inequality in the definition of $\bar I(t)$ is satisfied.
Let us further assume that $n\leq n(t)$. Then
$a\geq |\tau_n-T(t)|\geq (1-\eps) |\tau_n^\star(t)-T(t)|-\eps,$
which implies that 
\[|\tau_n-\tau_n^{\star}(t)|\leq \varepsilon(1+|\tau_n^\star(t) -T(t)|)\leq \varepsilon
\big(1+\sfrac{a+\varepsilon}{1-\varepsilon}\big)\leq 4a\varepsilon.\]
The same inequality holds if $n\geq n(t)$ and thus we have proved that $I(t)\subset \bar I(t)$ on $\c T_\eps(t)$ for all $t\geq 6a$.\pagebreak[3]

We now consider the sum on the right hand side of \eqref{eq0809-1}, but taken over all $n\in\bar I(t)$. 
First note that, for $n\in \bar I(t)$, on~$\c T_\eps(t)$, we have
$\tau_n\leq T(t)+(1+\eps) (\tau_n^\star(t)-T(t)) + \eps\leq \nicefrac t3 + 3a, \text{ if }n\geq n(t),$
and $\tau_n(1-F_n)\leq   \nicefrac{2b}t (T(t) +3a).$
Second we let, for $n\in\N$ and $s\geq 0$,
$$
R_n(s):= \sup_{u\geq s} \big|e^{-u} Z_n(\tau_n + u/F_n)-\xi_n \big| 
$$ 
and using that for $t\geq 4b$ and $n\in\bar I(t)$ one has $F_n\geq \nicefrac12$ we conclude that for all $t\geq \max(4b, 18a)$,
$$
|  e^{-\gamma F_n(t-\tau_n)}Z_n(t)-\xi_n|\leq R_n(\gamma  F_n(t-\tau_n)) \leq R_n(\gamma t/4).
$$
Hence we get that, for sufficiently large $t$, on $\c T_\eps(t)$,
\begin{align*}
&\Bigl|\int f\, d\Psi_t -\int f\, d\Psi_t^{\star}\Bigr|\\
&\hspace{1cm}\leq c \sum_{n\in \bar I(t)} \bigl( | \tau_n - \tau_n^\star(t)| +  \tau_n (1-F_n) + |  e^{-\gamma F_n(t-\tau_n)}Z_n(t)-\xi_n|\bigr) \\
&\hspace{1cm}\leq c \sum_{n\in \bar I(t)} \Bigl(4a \eps +  \frac{2b}{t}(T(t)+3a) + R_n(\gamma  t/4)\Bigr) \\
&\hspace{1cm}\leq c \, |\bar I(t)| \Bigl(4a \eps +  \frac{2b}{t}(T(t)+3a)\Bigr)  +c \sum_{n\in \bar I(t)} R_n(\gamma  t/4).
\end{align*}
By construction, the random processes $(R_n)_{n\geq 1}$ are independent of $(F_n)_{n\geq 1}$ and thus also of the random set $\bar I(t)$. We recall that, by Proposition~\ref{prop:Poisson_local}, $|\bar I(t)|$ converges in distribution to a Poisson distribution and $\lim_{s\to\infty} R_n(s)=0$, almost surely. Hence, 
$$
\lim_{t\to\infty} \sum_{n\in \bar I(t)} R_n(\gamma s t/4)=0, \text{ in probability}.
$$
Since further $\lim_{t\to\infty} T(t)/t=0$, almost surely, we conclude that, with high probability,
$$
\Bigl|\int f\, d\Psi_t -\int f\, d\Psi_t^{\star}\Bigr|\leq 8\eps a c(|\bar I(t)|+1).
$$
Recalling again that $|\bar I(t)|$ converges in distribution and that $\eps\in(0,1/2)$ can be made arbitrarily small we obtain convergence to zero in probability, as $t\uparrow\infty$.
\end{proof}

\begin{proof}[Proof of Proposition~\ref{prop:Poisson_local}] 
Let $f\colon \b R\times [0,\infty)\times [0,\infty)\to\b R$ be Lipschitz continuous and compactly supported.
Combining Lemmas~\ref{lem:cv_psi_star} and~\ref{lem:approx_psi}, together with Slutzky's lemma we get the desired result,
\[\int f\, d\Psi_t \Rightarrow \int f\,d \mathtt{PPP}(\zeta^{\star})\quad\text{ as }t\uparrow\infty,\]
where $\mathtt{PPP}(\zeta^{\star})$ denotes the Poisson point process with intensity $\zeta^\star$.
\end{proof}


\section{Negligibility of families outside the main window}
To deduce Theorem~\ref{poisson} from Proposition~\ref{prop:Poisson_local_Gamma}, one has to control the contribution of the point process near the closed boundaries of $[-\infty, +\infty]\times [0,+\infty]\times (0,+\infty]$. We prove that the families that are born too late, or that are not fit enough, are too small to contribute in the limit. They \emph{get absorbed by the open lower bound of the third coordinate}.
We first provide a simple calculation, which is at the heart of our proofs.
Recall that
\[n(t)=\left\lceil  \frac1{\mu(1-\nicefrac1t,1)}\right\rceil.\]

\begin{lem}\label{lem:arg_reg_var}
Let $F$ be a random variable with law $\mu$.
There exists $t_0>0$ such that, for all  $C\geq0$, $D>0$, there exists $K=K(C,D)>0$ such that
$$\mathbb E \left[\1{\{F\leq 1-\sfrac C t\}} e^{-D(1-F)t}\right] \leq \frac{K}{n(t)}  \mbox{ for all } t \geq t_0.
$$
Moreover, for all $D$, we have $\lim_{C\uparrow \infty} K(C,D)=0$.
\end{lem}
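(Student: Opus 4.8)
The plan is to reduce everything to the regularly varying behaviour of $\mu$ near~$1$ via a Potter-type estimate. Write $G:=1-F$, so that $\mathbb P(G\le s)=\mu([1-s,1])$, and put $g(t):=\mu(1-\sfrac1t,1)=\mathbb P(G\le\sfrac1t)$. Since $\mu$ has no atom at~$1$ we have $g(t)\to0$, hence $g(t)\le1$ and $n(t)=\lceil 1/g(t)\rceil\le 2/g(t)$ once $t$ exceeds some constant~$t_0$; so $g(t)\le 2/n(t)$ and it suffices to produce, for each $C\ge0,D>0$, a finite positive $K(C,D)$ with $\mathbb E[\1{\{F\le 1-\sfrac Ct\}}\,e^{-D(1-F)t}]\le\frac12 K(C,D)\,g(t)$ for $t\ge t_0$, and such that $K(C,D)\to0$ as $C\to\infty$ for fixed~$D$. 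Crucially, $t_0$ will depend only on the index~$\alpha$ and the slowly varying function~$\ell$ in~\eqref{as}, never on $C$ or~$D$.

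The first step is to show that there are $C_0\ge1$ and $t_0$, depending only on~$\mu$, with
\[
\mathbb P\bigl(G\le \sfrac xt\bigr)\ \le\ C_0\,x^{\alpha+1}\,g(t)\qquad\text{for all }x\ge1\text{ and }t\ge t_0.
\]
For $x\le s_0 t/2$, where $s_0$ is the threshold in Potter's bound for the slowly varying~$\ell$, this follows from~\eqref{as}: one sandwiches $\mu([1-\sfrac xt,1])$ between $\mu((1-\sfrac xt,1))$ and $\mu((1-\sfrac{2x}t,1))$ to absorb possible atoms at the endpoints, and applies the Potter bound $\ell(u)/\ell(v)\le 2(u/v)$, valid for $v\le u$ small, with $u=\sfrac{2x}t$ and $v=\sfrac1t$. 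For the remaining range $x>s_0 t/2$ one uses the trivial bound $\mathbb P(G\le\sfrac xt)\le1$ together with the fact that, as $g(t)=t^{-\alpha}\,\ell(1/t)$ with $\ell$ slowly varying, $g(t)\ge t^{-\alpha-1/2}$ for all large~$t$, so that $1/g(t)\le t^{\alpha+1/2}\le C_0(s_0 t/2)^{\alpha+1}$ for all large~$t$.

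With this in hand, the main computation uses the layer-cake identity $e^{-a}=\int_0^\infty \1{\{y>a\}}e^{-y}\,dy$ with $a=D(1-F)t$ and Tonelli's theorem,
\[
\mathbb E\bigl[\1{\{F\le 1-\sfrac Ct\}}\,e^{-D(1-F)t}\bigr]=\int_0^\infty e^{-y}\,\mathbb P\bigl(\sfrac Ct\le G<\sfrac{y}{Dt}\bigr)\,dy\ \le\ \int_{DC}^\infty e^{-y}\,\mathbb P\bigl(G\le\sfrac{y}{Dt}\bigr)\,dy,
\]
the lower limit~$DC$ arising because the indicator forces $\sfrac y{Dt}>\sfrac Ct$. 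For $C\ge1$ the number $x:=\sfrac yD$ is $\ge1$ throughout the range, so the estimate above applies and, substituting $y=Dw$,
\[
\mathbb E\bigl[\1{\{F\le 1-\sfrac Ct\}}\,e^{-D(1-F)t}\bigr]\ \le\ C_0\,g(t)\int_{DC}^\infty e^{-y}\bigl(\sfrac yD\bigr)^{\alpha+1}dy\ =\ C_0\,D\,g(t)\int_C^\infty w^{\alpha+1}e^{-Dw}\,dw.
\]
Since $\int_0^\infty w^{\alpha+1}e^{-Dw}\,dw<\infty$, the constant $K(C,D):=2C_0D\int_C^\infty w^{\alpha+1}e^{-Dw}\,dw$ is finite, strictly positive, and, being the tail of a convergent integral, tends to~$0$ as $C\to\infty$ for each fixed~$D$. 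For $0\le C<1$ one instead splits $\int_{DC}^\infty=\int_{DC}^{D}+\int_D^\infty$, estimating the first piece by $\mathbb P(G\le\sfrac y{Dt})\le g(t)$ and the second by the Potter-type bound, which yields a finite positive constant (no smallness is needed in this regime). Recalling $g(t)\le2/n(t)$ finishes the argument.

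The main obstacle is precisely that the Potter-type estimate has to serve for \emph{all} $x\ge1$: Potter's theorem only controls the ratio $\mathbb P(G\le\sfrac xt)/\mathbb P(G\le\sfrac1t)$ while both arguments stay small, whereas $x=\sfrac yD$ ranges over $[C,\infty)$ in the integral. Splitting at $x\asymp t$ and combining the crude bound with the slow decay of the regularly varying $g$ resolves this, and does so with a single exponent $\alpha+1$ and a single threshold~$t_0$ depending on~$\mu$ alone. One must also restrict the domain of integration to $y\ge DC$ \emph{before} estimating---this is exactly where the indicator $\1{\{F\le 1-\sfrac Ct\}}$ is used---so that no $C$-independent remainder survives into $K(C,D)$ and spoils the vanishing as $C\to\infty$.
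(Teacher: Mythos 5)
Your proof is correct and follows essentially the same route as the paper's: a layer-cake representation of the expectation, a Potter-type comparison of $\mu(1-y,1)$ with $\mu(1-\nicefrac1t,1)$, and a change of variables yielding $K(C,D)\propto D\int_C^\infty y^{\alpha+\delta}e^{-Dy}\,dy$ (you take $\delta=1$). The only difference is that you explicitly justify the Potter bound for arguments that are not small by splitting at $x\asymp t$ and using the slow decay of $g$, whereas the paper invokes the global version of Potter's bound from Bingham et al.\ directly; this is a welcome extra care but not a different argument.
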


To prove this lemma, we need Potter's bound, see Theorem~1.5.6$\,(ii)$ in~\cite{Bingham}. Since $\mu$ verifies Equation \eqref{as}, and is bounded from zero and infinity on every compact set of $(0,1]$, for all $\delta>0$, there exists a constant $\Lambda=\Lambda(\delta)$ such that, for all $0<x,y\leq 1$,
\begin{equation}\label{eq:Potter}
\frac{\mu(1-y, 1)}{\mu(1-x, 1)}\leq \Lambda \left(\frac{y}{x}\right)^{\alpha+\delta}.
\end{equation}

\begin{proof}[Proof of Lemma~\ref{lem:arg_reg_var}]
Fix $\delta>0$. We have
\begin{align*}
\mathbb E \left[\1{\{F\leq 1-\sfrac C t\}} e^{-D(1-F)t}\right]
&= \int_0^{\infty} \mathbb P\left(\1{\left\{F\leq 1-\sfrac C t\right\}} e^{-D(1-F)t}\geq x\right) dx
= \int_0^{\infty} \mathbb P\left(1-\frac{\log \nicefrac1x}{Dt}\leq F\leq 1-\frac C t \right) dx\\
&\leq \int_0^{e^{-CD}} \mu\left(1-\sfrac{\log \nicefrac1x}{Dt}, 1\right) dx
\leq \Lambda(\delta) 
\mu(1-\nicefrac1t, 1) \int_0^{e^{-CD}} \left(\sfrac1D \log \sfrac1x\right)^{\alpha+\delta} dx,
\end{align*}
where $\Lambda(\delta)$ is defined by the Potter's bound (see Equation~\eqref{eq:Potter}).
Changing variables $y= \sfrac1 D \log \sfrac 1x$ we get
\[
\mathbb E \left[\1{\{F\leq 1-\sfrac C t\}} e^{-D(1-F)t}\right]
\leq D \Lambda(\delta)
\mu(1-\nicefrac1t, 1) \int_C^{\infty} y^{\alpha+\delta} e^{-Dy} dy.
\]
Recall that $n(t) = \lfloor\nicefrac1{\mu(1-1/t, 1)}\rfloor$, and let
$K(C, D):=D \Lambda(\delta) \int_C^{\infty} y^{\alpha+\delta} e^{-Dy} dy$
to conclude the proof.
\end{proof}

\subsection{Contribution of the  unfit families}

\begin{lem}\label{lem:unfit}
For every $\eta>0$ and $c>0$ there exists $\kappa>0$ such that, for all 
sufficiently large $t$, we have
\[
\b P\Bigl(\max_{n\leq M(t)} \1{\{F_n\leq 1-\nicefrac\kappa t\}} \, Z_n(t)\geq c \,e^{\gamma (t-T(t))}\Bigr)
\leq \eta.
\]
\end{lem}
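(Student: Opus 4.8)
The strategy is to bound the probability in question by a first-moment estimate, exploiting the fact that a family born at time $\tau_n$ with fitness $F_n \leq 1 - \kappa/t$ is simply too small to reach the threshold $c\, e^{\gamma(t-T(t))}$ unless its birth time is anomalously small, and that the expected number of such families is controlled by Lemma~\ref{lem:arg_reg_var}. The key observation is that $Z_n(t) = Y_n(t-\tau_n)$ is (given $F_n$) a Yule process of rate $\gamma F_n$, so $e^{-\gamma F_n(t-\tau_n)} Z_n(t)$ has a bounded exponential moment uniformly in everything (Lemma~\ref{lem:doob}(c)). Hence $Z_n(t) \leq \xi_n \cdot e^{\gamma F_n(t-\tau_n)}(1+o(1))$ in a quantitative sense, and the deterministic growth factor $e^{\gamma F_n(t-\tau_n)}$ is the quantity we must estimate.

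First I would reduce to bounding $e^{\gamma F_n(t-\tau_n)}$ relative to $e^{\gamma(t-T(t))}$. Writing $\gamma F_n(t-\tau_n) - \gamma(t-T(t)) = -\gamma(1-F_n)(t-\tau_n) + \gamma(\tau_n - T(t)) - \gamma(1-F_n)(T(t)-\tau_n)$ — more simply, $\gamma F_n(t-\tau_n) - \gamma(t - T(t)) \leq \gamma(\tau_n - T(t)) - \gamma(1-F_n)(t-\tau_n)$ when $\tau_n \geq T(t)$, and one handles $\tau_n < T(t)$ separately. Using Proposition~\ref{lem:approx_tau_n} to control $\tau_n - T(t)$ in terms of $\log(n/n(t))$ with high probability, the growth factor for family $n$ is essentially $(n/n(t))^{\gamma/\lambda^\star} e^{-\gamma(1-F_n)(t-\tau_n)}$ up to lower-order corrections. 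Then I would pass to a union bound / first moment over $n$: the expected number of indices $n \leq M(t)$ with $F_n \leq 1-\kappa/t$ and $\xi_n (n/n(t))^{\gamma/\lambda^\star} e^{-\gamma(1-F_n)(t-\tau_n)} \geq c/2$, say. Since $\xi_1$ has exponential tails and the $F_n$ are i.i.d.\ with law $\mu$, and $t - \tau_n$ is bounded below by a constant multiple of $t$ for the relevant range of $n$ (those with $\tau_n$ near $T(t) \sim \frac{\alpha}{\lambda^\star}\log t$), the expectation factorizes into a sum over $n$ of $\P(F \leq 1-\kappa/t)$-type terms weighted by $e^{-D(1-F_n)t}$ with $D = \gamma(1 - \tau_n/t) \geq \gamma/2 > 0$, which is exactly the quantity Lemma~\ref{lem:arg_reg_var} bounds by $K(\kappa, D)/n(t)$.

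The remaining bookkeeping is: the number of families $n$ with $|\tau_n - T(t)| \leq $ (a slowly growing window, or even all $n \leq M(t) \sim e^{(\lambda^\star+o(1))t}$) that can conceivably contribute is at most something like $n(t) \cdot e^{Ca}$ for the near-window families, while the far-window families ($\tau_n$ much larger than $T(t)$) have $(n/n(t))^{\gamma/\lambda^\star}$ blowing up but $t-\tau_n$ shrinking, so I need a separate crude bound there — families born after time, say, $(1-\delta)t$ have size at most $e^{\gamma \delta t}$ deterministically small relative to $e^{\gamma(t-T(t))}$ with overwhelming probability since $T(t) = o(t)$; for $\tau_n \in (T(t), (1-\delta)t)$ one uses the exponential decay $e^{-D(1-F_n)t}$ with $D$ still bounded away from zero. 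Summing the $K(\kappa, D)/n(t)$ bounds over the $O(n(t))$ relevant families (after the $T$-event truncation) gives a total expectation of order $K(\kappa, \gamma/2)$ times a constant depending on $a, c$, and since $K(\kappa, D) \to 0$ as $\kappa \to \infty$ by Lemma~\ref{lem:arg_reg_var}, choosing $\kappa$ large makes the first moment, hence the probability, below $\eta$.

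The main obstacle I anticipate is not any single estimate but handling the interplay between the two "bad" regimes simultaneously: families born late (large $\tau_n - T(t)$, where the $(n/n(t))^{\gamma/\lambda^\star}$ growth factor is large and one must win via the decay in $t-\tau_n$) versus families with moderate birth time but marginal fitness (where one wins purely via Lemma~\ref{lem:arg_reg_var}). One must choose the window cutoff $a = a(t)$ (or split $n$ into dyadic blocks in $n/n(t)$) carefully so that in every block the product of (number of families in the block) $\times$ (per-family contribution probability) is summable, and the geometric decay in the block index must beat the geometric growth of block sizes — this requires that the effective exponent $\gamma(1 - \tau_n/t) - (\text{correction from }(n/n(t))^{\gamma/\lambda^\star})$ stays positive, which should follow because $\tau_n/t \to 0$ on the relevant part and only becomes an issue when $\tau_n$ is a genuine fraction of $t$, where the deterministic smallness argument takes over. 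Conditioning on the high-probability event $\mathcal{T}_\varepsilon(t)$ from the proof of Lemma~\ref{lem:approx_psi} (or its analogue) to replace $\tau_n$ by $\tau_n^\star(t)$ throughout will be the clean way to decouple the $F_n$'s from the birth times and make the first-moment computation rigorous.
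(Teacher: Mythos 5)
Your proposal follows essentially the same route as the paper's proof: bound $Z_n(t)$ by the i.i.d.\ supremum variables $A_n=\sup_{s\geq\tau_n} e^{-\gamma F_n(s-\tau_n)}Z_n(s)$ (exponential tails via Lemma~\ref{lem:doob}(c)), replace $\tau_n$ by deterministic approximations via Proposition~\ref{lem:approx_tau_n}, take a union bound over all $n$, and control each term with Lemma~\ref{lem:arg_reg_var}, with late-born families handled because the threshold for $A_n$ grows polynomially in $n/n(t)$. The one piece of bookkeeping you gloss over is that the exponent of that polynomial is proportional to $F_n$, so one must separately truncate the event $\{F_n<\delta\}$ (the paper does this at the cost of an extra $C_0\,\P(F<\delta)$ error term), and the union bound genuinely runs over $e^{\Theta(t)}\gg n(t)$ indices, so it is the decay in $n/n(t)$ rather than a count of $O(n(t))$ relevant families that makes the sum finite.
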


\begin{proof}
Let $c>0$ and $\kappa>0$. We analyse the event that there exists a family with fitness $F_n\leq 1-\nicefrac\kappa t$ and size
$e^{-\gamma (t-T(t))}  Z_n(t) \geq c$.
To this end, we define the time-shifted version $(Z^\star_n(t) \colon t \in \mathbb R)$ of the size of the $n^{\text{th}}$ family~by
\[
Z_n^{\star}(t):=Z_n(t+\tau_n-\tau_n^{\star}),\]
where
\[\tau_n^{\star}:= 
\begin{cases}
T(t) + \frac1{\lambda^\star - \varepsilon} \log \frac{n}{n(t)}-\varepsilon & \text{ if } n\leq n(t),\\
T(t) + \frac1{\lambda^\star + \varepsilon} \log \frac{n}{n(t)}-\varepsilon & \text{ if } n\geq n(t).
\end{cases}
\]
In view of Proposition~\ref{lem:approx_tau_n}, we have, with high probability as $t\to\infty$, that 
$Z_n^{\star}(t) \geq Z_n(t)$ for all $n\geq 0$.
Recalling the construction of the probability space at the beginning of Section~\ref{sec:approx_tau_n}, we note that the family $(A_n)_{n\geq 1}$ given by
\[A_n=\max_{s\geq \tau_n^{\star}}\frac{Z_n^{\star}(s)}{e^{\gamma F_n(s-\tau_n^{\star})}}
=\max_{s\geq \tau_n}\frac{Z_n(s)}{e^{\gamma F_n(s-\tau_n)}}\]
forms a sequence of i.i.d. random variables which is independent of $(F_n)_{n\geq 1}$. Further,
\begin{align*}
\bigl\{Z_n(t)\geq c\,e^{\gamma (t-T(t))}\bigr\} & \subset
\bigl\{Z_n^{\star}(t)\geq c\,e^{\gamma (t-T(t))}\bigr\} \\
& \subset \bigl\{A_n\geq c\, e^{\gamma [(t-T(t))-F_n(t-\tau_n^{\star})]}\bigr\}
= \bigl\{A_n\geq c\,e^{\gamma [(1-F_n)(t-T(t))-F_n(T(t)-\tau_n^{\star})]}\bigr\}.
\end{align*}
Therefore,
\begin{equation}\label{thisthing}
\begin{aligned}
&\b P\Bigl(\max_{n\leq M(t)} \1{\{F_n\leq 1-\nicefrac\kappa t\}} \, Z_n(t)\geq c \,e^{\gamma (t-T(t))}\Bigr)\\
&\hspace{1cm}\leq \b P(T(t)\geq\nicefrac t 2) 
 +\sum_{n=1}^{\infty}
\b P\bigl(A_n \1{\{F_n\leq 1-\nicefrac\kappa t\}} \geq c\,e^{\gamma [(1-F_n)\nicefrac t 2 -F_n(T(t)-\tau_n^{\star})]}\bigr).
\end{aligned}
\end{equation}
In terms of $\varphi(u):=\b P(A_1\geq u)$ we have
\[
\b P\left(
A_n \1{\{F_n\leq 1-\nicefrac\kappa t\}}  \geq c\, e^{\gamma [(1-F_n)\nicefrac t 2-F_n(T(t)-\tau_n^{\star})]}\right)
=\b E\left[\1{\{F_n\leq 1-\nicefrac\kappa t\}} \varphi \left(ce^{\gamma [(1-F_n)\nicefrac t 2 - F_n(T(t)-\tau_n^{\star})]}\right)\right].
\]
By Lemma~\ref{lem:doob}, we have $\varphi(u)\leq C_0\,e^{-u/2}$ so that
\begin{align*}
&\b E\left[\1{\{F_n\leq 1-\nicefrac\kappa t\}} \varphi \left(ce^{\gamma [(1-F_n)\nicefrac t 2 - F_n(T(t)-\tau_n^{\star})]}\right)\right]\\
&\hspace{2cm}\leq C_0 \, \b E\Bigl[ \1{\{F_n\leq 1-\nicefrac\kappa t\}}
\exp\left\{-{\textstyle \frac c2}e^{\gamma [(1-F_n)\nicefrac t 2 -F_n(T(t)-\tau_n^{\star})]}\right\}\Bigr].
\end{align*}
Noting that $T(t)-\tau_n^{\star}$ is deterministic we get that the expectation on the right equals
\begin{align*}
\b E\Bigl[ \1{\{F\leq 1-\nicefrac\kappa t\}}
\exp\left\{-{\textstyle \frac c2}e^{\gamma [(1-F)\nicefrac t 2 - F (T(t)-\tau_n^{\star})]}\right\}\Bigr],
\end{align*}
where $F$ is a random variable of law $\mu$. We now fix small numbers $\delta, \varrho>0$ and note that there exists a constant~$C_\varrho$ such that $e^{-y}\leq C_\varrho y^{-\varrho}$ for all $y\geq 0$. Using this, and recalling the definition of $\tau_n^\star$, we get, for $n\geq n(t)$,
\begin{align*}
\b E\Bigl[  \1{\{\delta \leq F\leq 1-\nicefrac\kappa t\}} &
\exp\left\{-{\textstyle \frac c2}e^{\gamma [(1-F)\nicefrac t 2 - F (T(t)-\tau_n^{\star})]}\right\}\Bigr]\\
& \leq \b E\Bigl[ \1{\{F\leq 1-\nicefrac\kappa t\}} 
\exp\Big\{-{\textstyle \frac c2} \left(\sfrac{n}{n(t)}\right)^{\frac{\delta \gamma}{\lambda^\star+\eps}}e^{\gamma [(1-F)\nicefrac t 2-\eps]}\Big\}\Bigr]\\
&\leq C_\varrho \left({\textstyle \frac c2}\right)^{-\varrho} e^{\gamma \varrho\eps}  \left(\sfrac{n(t)}{n}\right)^{\varrho\frac{\gamma\delta }{\lambda^\star+\eps}}  \b E\Bigl[ \1{\{F\leq 1-\nicefrac\kappa t\}}
e^{-\varrho\gamma (1-F)\nicefrac t 2}\Bigr].
\end{align*}
We now apply Lemma \ref{lem:arg_reg_var} and get, for $n\geq n(t)$,
\begin{align}
& \b E\Bigl[ \1{\{\delta \leq F\leq 1-\nicefrac\kappa t\}}
\exp\left\{-{\textstyle \frac c2}e^{\gamma [(1-F)\nicefrac t 2 - F(T(t)-\tau_n^{\star})]}\right\}\Bigr]\leq C_\varrho \left({\textstyle \frac c2}\right)^{-\varrho} e^{\gamma \varrho\eps}  \left(\sfrac{n(t)}{n}\right)^{\varrho\frac{\gamma\delta }{\lambda^\star+\eps}}  \frac{K(\kappa, \nicefrac{\varrho\gamma }{2})}{n(t)}. \label{eq:truc1}
\end{align}
Similarly we get, for $n\leq n(t)$,
\begin{align}
& \b E\Bigl[ \1{\{\delta \leq F\leq 1-\nicefrac\kappa t\}}
\exp\left\{-{\textstyle \frac c2}e^{\gamma [(1-F)\nicefrac t 2 - F(T(t)-\tau_n^{\star})]}\right\}\Bigr]\leq C_\varrho \left({\textstyle \frac c2}\right)^{-\varrho} e^{\gamma \varrho\eps}  \left(\sfrac{n(t)}{n}\right)^{\varrho\frac{\gamma}{\lambda^\star-\eps}}  \frac{K(\kappa, \nicefrac{\varrho\gamma }{2})}{n(t)}. \label{eq:truc2}
\end{align}
Applying \eqref{eq:truc1} with $\varrho_+>\frac{\lambda^\star+\eps}{\gamma\delta}$, if $n> n(t)$, and  \eqref{eq:truc2} with $\varrho_-< 
\frac{\lambda^\star-\eps}\gamma$, if $n\leq n(t)$, we get
\begin{align*}
&\sum_{n=1}^{\infty}
\b P(A_n \1{\{F_n\leq 1-\nicefrac\kappa t\}} \geq c\,e^{\gamma [(1-F_n)\nicefrac t 2 -F_n(T(t)-\tau_n^{\star})]}\bigr)\\
&\hspace{2cm}\leq C_0
 C_{\varrho_-}  \left({\textstyle \frac c2}\right)^{-\varrho_-} e^{\gamma {\varrho_-}\eps} \, \frac{K(\kappa,  \nicefrac{{\varrho_-}\gamma }2)}{n(t)} \sum_{n=1}^{n(t)} \left(\sfrac{n(t)}{n}\right)^{\varrho_- \frac{\gamma }{\lambda^\star-\eps}}\\
&\hspace{4cm}+
 C_0 C_{\varrho_+}  \left({\textstyle \frac c2}\right)^{-\varrho_+} e^{\gamma {\varrho_+}\eps} \,\frac{K(\kappa,  \nicefrac{{\varrho_+}\gamma }2)}{n(t)} \sum_{n=n(t)+1}^{\infty} \left(\sfrac{n(t)}{n}\right)^{\varrho_+\frac{\gamma\delta }{\lambda^\star+\eps}} + C_0 \, \b P(F<\delta)\\
&\hspace{2cm}\leq
C \, \big(K(\kappa, \nicefrac{{\varrho_-}\gamma }2)+ K(\kappa, \nicefrac{{\varrho_+}\gamma }{2}) \big) + C_0 \,\b P(F<\delta),
\end{align*}
where $C$ is a constant not depending on $\kappa$ or $t$, using that both sums are bounded by a constant multiple of~$n(t)$. Recall that $\lim_{\kappa\to\infty} K(\kappa, \nicefrac{{\varrho_\pm}\gamma }{2})=0$ and $\P( F < \delta) \to 0$ as $\delta\downarrow 0$. Recalling~\eqref{thisthing}  and noting that $\P( T(t) \geq \nicefrac{t}2) \to 0$,  as $t\uparrow\infty$,  completes the proof.
\end{proof}

%

\subsection{Contribution of the families born late}

\begin{lem}\label{lem:late}
For every $\eta>0$ and $c>0$ there exists $\upsilon>1$ such that, for all sufficiently large $t$, we have
\[
\b P\Bigl(\max_{\upsilon n(t)\leq n\leq M(t) }Z_n(t)\geq c \,e^{\gamma (t-T(t))}\Bigr)
\leq \eta.
\]
\end{lem}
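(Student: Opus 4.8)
The plan is to follow the blueprint of the proof of Lemma~\ref{lem:unfit}, first disposing of the families of ordinary fitness and then exploiting the scarcity of families of near-maximal fitness. Fix $\varepsilon\in(0,\lambda^\star)$ and, given $\eta,c>0$, apply Lemma~\ref{lem:unfit} with $\eta/3$ in place of $\eta$ to obtain $\kappa>0$ such that, for all large $t$,
\[\b P\Bigl(\max_{n\leq M(t)}\1{\{F_n\leq 1-\nicefrac\kappa t\}}\,Z_n(t)\geq c\,e^{\gamma(t-T(t))}\Bigr)\leq\tfrac\eta3.\]
Since $\max_{\upsilon n(t)\leq n\leq M(t)}Z_n(t)$ is the larger of the same maximum restricted to $\{F_n\leq 1-\nicefrac\kappa t\}$ and to $\{F_n>1-\nicefrac\kappa t\}$, it remains to bound $\b P(\max_{\upsilon n(t)\leq n\leq M(t)}\1{\{F_n>1-\nicefrac\kappa t\}}Z_n(t)\geq c\,e^{\gamma(t-T(t))})$, i.e.\ to show that no \emph{fit} family born late becomes large.

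For this I would re-use the machinery from the proof of Lemma~\ref{lem:unfit}: the deterministic shifts $\tau_n^\star=T(t)+\frac1{\lambda^\star+\varepsilon}\log\frac n{n(t)}-\varepsilon$ (for $n\geq n(t)$), the shifted families $Z_n^\star$, and the i.i.d.\ sequence $(A_n)$ — independent of $(F_n)$, with $\varphi(u):=\b P(A_1\geq u)\leq C_0e^{-u/2}$ — satisfying $Z_n^\star(t)\leq A_n e^{\gamma F_n(t-\tau_n^\star)}$. Let $\c T$ be the high-probability event, guaranteed by Proposition~\ref{lem:approx_tau_n}, on which $Z_n^\star(t)\geq Z_n(t)$ for all $n\geq n(t)$ and $T(t)\leq t/2$. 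Take $\upsilon>1$ so large that $\tfrac1{\lambda^\star+\varepsilon}\log\upsilon\geq\varepsilon$, and $t$ so large that $1-\nicefrac\kappa t\geq\tfrac12$. Then, on $\c T$, for every $n\geq\upsilon n(t)$ with $F_n>1-\nicefrac\kappa t$,
\[(t-T(t))-F_n(t-\tau_n^\star)=(1-F_n)(t-T(t))+F_n\bigl(\tfrac1{\lambda^\star+\varepsilon}\log\tfrac n{n(t)}-\varepsilon\bigr)\geq\tfrac12\bigl(\tfrac1{\lambda^\star+\varepsilon}\log\tfrac n{n(t)}-\varepsilon\bigr),\]
both summands being nonnegative and $F_n\geq\tfrac12$. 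Consequently, with $c':=ce^{-\gamma\varepsilon/2}$ and $\rho:=\frac\gamma{2(\lambda^\star+\varepsilon)}>0$,
\[\{Z_n(t)\geq c\,e^{\gamma(t-T(t))}\}\cap\{F_n>1-\tfrac\kappa t\}\cap\c T\ \subseteq\ \bigl\{A_n\geq c'\,(\tfrac n{n(t)})^{\rho}\bigr\}\cap\{F_n>1-\tfrac\kappa t\}.\]
By the independence of $A_n$ and $F_n$ and a union bound over $n\geq\upsilon n(t)$, the fit contribution is at most
\[\b P(\c T^c)+\mu\bigl(1-\tfrac\kappa t,1\bigr)\,C_0\sum_{n\geq\upsilon n(t)}e^{-\frac{c'}2(n/n(t))^{\rho}}.\]

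To finish I would compare the sum with an integral, $\sum_{n\geq\upsilon n(t)}e^{-\frac{c'}2(n/n(t))^{\rho}}\leq n(t)\,J(\upsilon)$ with $J(\upsilon):=\int_{\upsilon-1}^\infty e^{-\frac{c'}2y^{\rho}}\,dy$, and combine Potter's bound \eqref{eq:Potter} with the definition of $n(t)$ to get $\mu(1-\nicefrac\kappa t,1)\leq C_1(\kappa)/n(t)$ for all large $t$. The factors $n(t)$ cancel, so the fit contribution is at most $\b P(\c T^c)+C_0C_1(\kappa)J(\upsilon)$. Since $J(\upsilon)\to0$ as $\upsilon\to\infty$ (it is the tail of a convergent integral), one first enlarges $\upsilon$ so that $C_0C_1(\kappa)J(\upsilon)\leq\eta/3$, then takes $t$ large so that $\b P(\c T^c)\leq\eta/3$; adding the $\eta/3$ contributed by the unfit families gives the bound $\eta$. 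The main obstacle — and the reason a union bound over late families alone does not work — is exactly this cancellation of $n(t)$: there are of order $n(t)\to\infty$ families born just after the window, each only exponentially unlikely to reach size of order $e^{\gamma(t-T(t))}$, so one genuinely needs the extra factor $\mu(1-\nicefrac\kappa t,1)\asymp1/n(t)$, i.e.\ the rarity of near-maximal fitness values, to make the estimate uniform in~$t$.
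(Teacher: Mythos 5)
Your proof is correct, and although it runs on the same engine as the paper's -- the shifted families $Z_n^\star$, the deterministic approximate birth times $\tau_n^{\star}$ controlled via Proposition~\ref{lem:approx_tau_n}, the i.i.d.\ maxima $A_n$ with exponential tail independent of $(F_n)$, and the crucial cancellation of $n(t)$ between the number of late candidates and the rarity of near-maximal fitness -- the decomposition is genuinely different. The paper does not split by fitness at all: it treats every $n\geq \upsilon n(t)$ simultaneously, replaces $F_n$ by a small $\delta$ in the time-delay gain (paying a separate $\b P(F<\delta)$ term), converts the exponential tail of $A_1$ into a polynomial bound $e^{-y}\leq C_\varrho y^{-\varrho}$ with $\varrho$ so large that the resulting power of $n(t)/n$ exceeds one, and extracts the $1/n(t)$ factor by integrating over the fitness distribution via Lemma~\ref{lem:arg_reg_var}; summing then yields the small factor $\upsilon^{1-\varrho\gamma\delta/(\lambda^\star+\eps)}$. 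You instead invoke Lemma~\ref{lem:unfit} as a black box to dispose of the unfit families and, for the fit ones, keep the full exponential tail of $A_1$, so the summand decays like a stretched exponential in $(n/n(t))^{\rho}$ and the fitness factor is simply $\mu(1-\nicefrac\kappa t,1)\lesssim 1/n(t)$ by Potter's bound, with no need for the $\delta$-truncation or for Lemma~\ref{lem:arg_reg_var}. Your route is more modular and the individual estimates more elementary, at the price of making Lemma~\ref{lem:late} logically dependent on Lemma~\ref{lem:unfit}; this is harmless because the quantifier order ($\kappa$ fixed from $\eta,c$ before $\upsilon$ is chosen) is respected. Your closing remark correctly isolates the key point shared by both arguments: a bare union bound fails because there are of order $n(t)$ late families, and one genuinely needs the compensating factor $\mu(1-\nicefrac\kappa t,1)\asymp 1/n(t)$, whether obtained directly from Potter's bound as you do or in the integrated form of Lemma~\ref{lem:arg_reg_var} as in the paper.
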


\begin{proof}
The proof is similar to the proof of Lemma~\ref{lem:unfit}. Let $c>0$ and  define the processes  $(Z^\star_n(t) \colon t \in \mathbb R)$, the sequence $(A_n)$,  and the numbers $\tau_n^{\star}$ as above. We have
$$\{Z_n(t)\geq c\,e^{\gamma (t-T(t))}\} \subset
\{Z_n^{\star}(t)\geq c\,e^{\gamma (t-T(t))}\}
\subset \{A_n\geq c\,e^{\gamma [(1-F_n)(t-T(t))-F_n(T(t)-\tau_n^{\star})]}\}.$$
Therefore, for any $\upsilon>1$ and $n \geq\upsilon n(t)$,
\begin{equation}\label{thisthingagain}
\begin{aligned}
\b P\Bigl(\max_{n\geq \upsilon n(t)} Z_n(t)\geq c \,e^{\gamma (t-T(t))}\Bigr)
\leq \b P(T(t)\geq\nicefrac t 2) 
 +\sum_{n=\upsilon n(t)}^{\infty}
\b P\bigl(A_n\geq c\,e^{\gamma [(1-F_n)\nicefrac t 2 -F_n(T(t)-\tau_n^{\star})]}\bigr).
\end{aligned}
\end{equation}
An argument analogous to Lemma~\ref{lem:unfit} yields, for any $\delta>0$,
\[
\b P\left( A_n  \geq c\, e^{\gamma [(1-F_n)\nicefrac t 2-F_n(T(t)-\tau_n^{\star})]}\right)
\leq C_0  \, \b E\Bigl[ 
\exp\left\{-{\textstyle \frac c2}e^{\gamma [(1-F)\nicefrac t 2 - \delta (T(t)-\tau_n^{\star})]}\right\}\Bigr] + C_0 \, \b P(F<\delta),
\]
where $F$ is a random variable of law $\mu$. We now pick \smash{$\varrho > \frac{\lambda^\star+\eps}{\gamma\delta}$}. As in  Lemma~\ref{lem:unfit} we use existence of a constant $C_\varrho$ such that $e^{-y}\leq C_\varrho y^{-\varrho}$, for all $y\geq 0$, and Lemma~\ref{lem:arg_reg_var} to get
\begin{align*}
 \b E\Bigl[ 
\exp\left\{-{\textstyle \frac c2}e^{\gamma [(1-F)\nicefrac t 2 - \delta (T(t)-\tau_n^{\star})]}\right\}\Bigr]
& \leq C_\varrho \left({\textstyle \frac c2}\right)^{-\varrho} e^{\gamma \varrho\eps}  \left(\sfrac{n(t)}{n}\right)^{\varrho\frac{\gamma\delta }{\lambda^\star+\eps}}  \frac{K(0, \nicefrac{\varrho\gamma }{2})}{n(t)}.
\end{align*}
Summing over $n\geq \upsilon n(t)$ yields
\begin{align*}
\sum_{n\geq \upsilon n(t)}
\b P(A_n \geq c\,e^{\gamma [(1-F_n)\nicefrac t 2 -F_n(T(t)-\tau_n^{\star})]}\bigr)
& \leq
 C_0 C_{\varrho}  \left({\textstyle \frac c2}\right)^{-\varrho} e^{\gamma {\varrho}\eps} \, \frac{K(0,  \nicefrac{{\varrho}\gamma }2)}{n(t)} \sum_{n=\upsilon n(t)}^{\infty} \left(\sfrac{n(t)}{n}\right)^{\varrho\frac{\gamma\delta }{\lambda^\star+\eps}} + C_0 \, \b P(F<\delta) \\
& \leq C \, \upsilon^{1- \varrho\frac{\gamma\delta }{\lambda^\star+\eps}} + C_0 \, \b P(F<\delta),
\end{align*}
where $C$ is a constant that does not depend on $t$ or $\upsilon$. Finally, using that $1- \varrho\frac{\gamma\delta }{\lambda^\star+\eps}<0$,
 recalling that   $\P( F< \delta) \to 0$, as $\delta\downarrow 0$, $\P( T(t) \geq \nicefrac{t}2) \to 0$, as $t\uparrow\infty$, and plugging this into~\eqref{thisthingagain} completes the proof.
\end{proof}

{
\subsection{Families born early are not fit enough}

The following lemma is a standard extreme value result that is included for completeness.
\smallskip

\begin{lem}\label{lem:born_early}
For all $\kappa, \eta >0$, there exists $w= w(\kappa, \eta)>0$ such that, 
for all $t$ large enough,
\[\mathbb P\Big(\Gamma_t\big([-\infty, -\log w]\times [0, \kappa] \times [0,\infty]\big) = 0\Big) \geq 1-\eta.\]
\end{lem}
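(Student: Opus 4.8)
The plan is to estimate the probability of the complementary event
\[
E_w^c=\Bigl\{\exists\, n\le M(t)\colon \tau_n-T(t)\le -\log w \ \text{ and }\ (t-\tau_n)(1-F_n)\le\kappa\Bigr\}
\]
by a union bound over the comparatively few families founded before time $T(t)-\log w$, exploiting that each $F_n$ is independent of $\tau_n$ and that $\mu$ has a regularly varying tail at~$1$.

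First I would cut down the set of relevant indices. Fix $\eps\in(0,\lambda^\star)$ and take $w>1$. Since $T(t)=\tau_{n(t)}$, Proposition~\ref{lem:approx_tau_n} (applied with $n_0=n(t)$) provides a high-probability event $\mathcal A_t$ on which $\tau_n-T(t)\ge\frac1{\lambda^\star-\eps}\log\frac{n}{n(t)}-\eps$ for all $1\le n\le n(t)$. As foundation times are increasing, $\tau_n\ge T(t)>T(t)-\log w$ for $n\ge n(t)$, so on $\mathcal A_t$ any $n$ with $\tau_n-T(t)\le-\log w$ satisfies $n\le N_w(t):=\big\lceil n(t)\,e^{(\lambda^\star-\eps)\eps}\,w^{-(\lambda^\star-\eps)}\big\rceil$. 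Moreover $T(t)/t\to0$ almost surely, so $\{T(t)\le t/2\}$ has high probability, and on it $t-\tau_n\ge t/2$ for every $n\le n(t)$.

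On $\mathcal A_t\cap\{T(t)\le t/2\}$ one then has $(t-\tau_n)(1-F_n)\le\kappa$ for some admissible $n$ only if $1-F_n\le 2\kappa/t$ for some $n\le N_w(t)$, whence
\[
\mathbb P(E_w^c)\le\mathbb P(\mathcal A_t^c)+\mathbb P(T(t)>t/2)+\sum_{n=1}^{N_w(t)}\mu\bigl(1-\tfrac{2\kappa}{t},1\bigr).
\]
By Potter's bound~\eqref{eq:Potter} (with $y=2\kappa/t$, $x=1/t$) there is a constant $C_\kappa$ with $\mu(1-2\kappa/t,1)\le C_\kappa\,\mu(1-1/t,1)$ for all large $t$, and $\mu(1-1/t,1)\le 2/n(t)$ because $n(t)=\lceil1/\mu(1-1/t,1)\rceil$. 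Hence the last sum is at most $2C_\kappa N_w(t)/n(t)\le C_\kappa'\,w^{-(\lambda^\star-\eps)}$ for all $t$ large enough, with $C_\kappa'$ independent of $w$ and~$t$. Choosing $w>1$ so large that $C_\kappa'\,w^{-(\lambda^\star-\eps)}<\eta/3$, and then $t$ large enough that the first two terms are each below~$\eta/3$, gives $\mathbb P(E_w^c)<\eta$, as required.

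The one point that needs care is the index reduction in the second paragraph: a crude union bound over all $\sim n(t)$ families present at time $\approx T(t)$ would only give the useless non-summable bound $C_\kappa\cdot n(t)\cdot\tfrac1{n(t)}=C_\kappa$, so it is essential to invoke Proposition~\ref{lem:approx_tau_n} to see that only a fraction of order $w^{-\lambda^\star}$ of them are founded before time $T(t)-\log w$. The rest is the routine extreme-value estimate for an i.i.d.\ sample from a regularly varying tail, together with the almost sure bound $T(t)=o(t)$ already used in Section~5.
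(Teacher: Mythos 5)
Your argument is correct and follows essentially the same route as the paper's: both reduce the problem, via Proposition~\ref{lem:approx_tau_n}, to the observation that only about $n(t)\,w^{-\lambda^\star}$ families are founded before time $T(t)-\log w$, each of which has probability of order $\kappa^\alpha/n(t)$ (by regular variation of $\mu$ at $1$) of being fit enough to land in $[0,\kappa]$ in the second coordinate. The only cosmetic differences are that you use a union bound where the paper computes $\mathbb E\big[\mu(0,1-\nicefrac{\kappa}{t})^{M(T(t)-\log w)}\big]$ exactly, and that you are a little more careful in replacing $t-\tau_n$ by $t/2$ on the high-probability event $\{T(t)\le t/2\}$ rather than by $t$.
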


\begin{proof}
We have
\begin{align*}
\mathbb P\Big(\Gamma_t\big([-\infty, -\log w]\times [0, \kappa] \times [0,\infty]\big) = 0\Big)
&= \mathbb P\big(F_n < 1-\nicefrac\kappa t,\,\forall n \text{ s.t.\ }\tau_n\leq T(t)-\log w\big)\\
&= \mathbb P\big(F_n < 1-\nicefrac\kappa t,\,\forall n \leq M(T(t)-\log w)\big),
\end{align*}
where we recall that $M(T(t)-\log w)$ is the number of families that were founded before time $T(t)-\log w$.
Thus, in view of Hypothesis~\eqref{as},
\begin{align*}
\mathbb P\Big(\Gamma_t\big([-\infty, -\log w]\times [0, \kappa] \times [0,\infty]\big) = 0\Big)
&= \mathbb E\big[\mu(0, 1-\nicefrac\kappa t)^{M(T(t)-\log w)}\big]\\
&= (1+o(1))\, \mathbb E\Big[\exp\big(-M(T(t)-\log w) (\nicefrac\kappa t)^{\alpha} \ell(\nicefrac{\kappa}{t})\big)\Big],
\end{align*}
when $t$ tends to infinity.
Note that, by Lemma~\ref{lem:approx_tau_n}, with probability tending to one, 
\[\log w = T(t) - \big(T(t)-\log w\big) 
\leq \tau_{n(t)} - \tau_{M(T(t)-\log w)} 
\leq \frac1{\lambda^\star-\varepsilon} \log \frac{n(t)}{M(T(t)-1)} + \varepsilon,\]
implying that
\[
M(T(t)-1) \leq n(t)\exp\big[-(\lambda^\star-\varepsilon)(\log w-\varepsilon)\big].
\]
Recall that, using \eqref{as} again, $n(t)\sim t^{\alpha}/\ell(\nicefrac1t)$. 
We thus get
\[\mathbb P\Big(\Gamma_t\big([-\infty, -\log w]\times [0, \kappa] \times [0,\infty]\big) = 0\Big)
\geq (1+o(1))\, \exp\big(-e^{- (\lambda^\star-\varepsilon)(\log w-\varepsilon)} \kappa^{\alpha} \ell(\nicefrac{\kappa}{t})/\ell(\nicefrac1t)\big).\]
Since $\ell$ is a slowly varying function, we have that $\ell(\nicefrac\kappa t)/\ell(\nicefrac1t) \to 1$ when $t$ tends to infinity.
In conclusion,
\[\mathbb P\Big(\Gamma_t\big([-\infty, -\log w]\times [0, \kappa] \times [0,\infty]\big) = 0\Big)
\geq (1+o(1))\, \exp\big(-e^{-(\lambda^\star-\varepsilon)(\log w-\varepsilon)} \kappa^{\alpha}\big)
\to 1,
\]
as $w\uparrow\infty$, which concludes the proof.
\end{proof}
}

\section{Proof of non-extensiveness of condensation}

\subsection{Proof of Theorem~\ref{poisson}}

Let $\eta, c>0$. By Lemma~\ref{lem:unfit} there exists $\kappa=\kappa(c,\eta)$ such that
$$\liminf_{t\to\infty}\P \big( \Gamma_t([-\infty,\infty] \times{[\kappa,\infty]} \times (c,\infty])=0\big)\geq 1-\eta.$$
By Lemma \ref{lem:late} there exists $\upsilon=\upsilon(c,\eta)>1$ such that
$$\liminf_{t\to\infty}\P \big( \Gamma_t({[\log \upsilon, \infty]} \times [0,\infty] \times (c,\infty])=0\big)\geq 1-\eta.$$
By Lemma~\ref{lem:born_early}, there exists $w= w(\kappa,\eta)>0$ such that
$$\liminf_{t\to\infty}\P \big( \Gamma_t([-\infty, -\log w] \times [0,\kappa] \times [0,\infty])=0\big)\geq 1-\eta.$$
Finally, Proposition~\ref{prop:Poisson_local_Gamma} gives that $\Gamma_t$ converges on
${(}-\infty,\log\upsilon) \times[0, \kappa) \times (c,\infty]$ to the Poisson process with intensity 
measure~$\zeta$. 
Combining these four facts and using that $\eta>0$ is arbitrarily small, we get
convergence on $[-\infty,\infty] \times[0, \infty] \times (c,\infty]$. 
As this holds for all $c>0$ the proof is complete.

\subsection{Proof of Corollary~\ref{cor:as_laws}}

$(i)$\ We fix $x>0$ and apply the vague convergence proved in Theorem~\ref{poisson} to the compact set $K:=[-\infty, +\infty]\times [0,\infty] \times [x,\infty]$.
We get, as $t\to\infty$, that
\[\sum_{n=1}^{M(t)} \indi_{K}(\tau_n-T(t), (t-\tau_n)(1-F_n), e^{-\gamma (t-T(t))}Z_n(t))
\Rightarrow \mathtt{Poisson}\left(\int_K d\zeta\right).\]
Hence
\begin{align}
\mathbb P\left(e^{-\gamma (t-T(t))}\max_{n\in\{1,\ldots, M(t)\}}{Z_n(t)} \geq x\right) 
&\to \mathbb P\left(\mathtt{Poisson}\left(\int_K d\zeta\right)\geq 1\right)= 1- \exp\left(-\int_K d\zeta\right). \label{firko}
\end{align}
Integrating out gives
\begin{align*}
\int_K d\zeta
&= \int_{-\infty}^{+\infty}\int_0^{\infty} \int_x^{\infty}
\alpha f^{\alpha-1} \lambda^{\star}e^{\lambda^{\star}s} e^{-ze^{\gamma (s+f)}} e^{\gamma (s+f)} dz\ df\ ds\\
& = \int_{0}^{\infty}e^{-w} \int_0^{\infty} \alpha f^{\alpha-1} 
\int_{-\infty}^{\frac{1}{\gamma }\log \frac{w}{x}-f} \lambda^{\star} e^{\lambda^{\star}s} ds\ df\ dw\\
& = \left(\int_{0}^{\infty}e^{-w} \left(\frac{w}{x}\right)^{\frac{\lambda^{\star}}{\gamma }} dw\right) 
\left(\int_0^{\infty} \alpha f^{\alpha-1} e^{-\lambda^{\star}f} df\right)
= \frac{\Gamma(\alpha+1)\Gamma(1+\frac{\lambda^{\star}}{\gamma })}{(\lambda^{\star})^{\alpha}} \ x^{-\frac{\lambda^{\star}}{\gamma }}.
\end{align*}
Thus, the right hand side in \eqref{firko} is $1-\exp(-\Lambda x^{-\eta}),$ for
$\Lambda = \frac{\Gamma(\alpha+1)\Gamma(1+\frac{\lambda^{\star}}{\gamma })}{(\lambda^{\star})^{\alpha}}$ and $\eta = \frac{\lambda^{\star}}{\gamma }$.
Summarising,
\[\mathbb P\Big(\big(e^{-\gamma (t-T(t))}\max_{n\in\{1,\ldots,M(t)\}}{Z_n(t)}\big)^{-\eta} \leq y\Big)
\to 1-\exp(-\Lambda y) = \mathbb P(\mathtt{Exp}(\Lambda) \leq y),\]
which proves the statement.

$(ii)$\ The probability that $t(1-V(t))$ is in an interval $[a,b]$, for some $0\leq a<b$, converges to
\[\int_{-\infty}^{+\infty} \int_a^b \int_0^{\infty} e^{-\zeta([-\infty, +\infty]\times [0, \infty]\times [z, \infty])} \, \zeta(ds, df, dz),\]
where the inner integration is with respect to~$z$, the middle with respect to~$f$, and the outer with respect to $s$. We recall from above that 
\[\zeta([-\infty, +\infty]\times [0, \infty]\times [z, \infty]) = \frac{\Gamma(\alpha+1)\Gamma(1+\frac{\lambda^{\star}}{\gamma })}{(\lambda^{\star})^{\alpha}} \ z^{-\frac{\lambda^{\star}}{\gamma }}.
\]
Under~\eqref{cond}, we have  $\lambda^\star = \gamma$ and the right hand side becomes
$\frac{\alpha \Gamma (\alpha, \lambda^\star)}{z}$, where
$$\Gamma(\alpha, \lambda^\star) := 
\int_0^{\infty} f^{\alpha-1}e^{-\lambda^\star f} \,df
= \frac{\Gamma(\alpha)}{(\lambda^{\star})^{\alpha}}.$$
We get, substituting $v=e^{\gamma (s+f)}$ 
and  recalling that $\lambda^\star=\gamma$,
\begin{align*}
\int_{-\infty}^{+\infty} \int_0^{\infty} e^{-\zeta([-\infty, +\infty]\times [0, \infty]\times [z, \infty])} d\zeta(s, f, z)
&= \alpha f^{\alpha-1} e^{-\lambda^\star f} df 
\int_0^{\infty}  \bigg( \int_0^{\infty} 
v e^{-zv} dv  \bigg)\, e^{-\nicefrac{\alpha \Gamma(\alpha, \lambda^\star)}{z}} dz\\
&= \alpha f^{\alpha-1} e^{-\lambda^\star f} df 
\int_0^{\infty} 
\frac{e^{-\nicefrac{\alpha \Gamma(\alpha, \lambda^\star)}{z}}}{z^2} \, dz
= \frac{f^{\alpha-1} e^{-\lambda^\star f} df }{\Gamma(\alpha, \lambda^\star)}.
\end{align*}
$(iii)$\ By Theorem~\ref{poisson} the random variable $S(t)-T(t)$ converges to a random variable $U$ with density
\[\int_{0}^{\infty} \int_0^{\infty} e^{-\zeta([-\infty, +\infty]\times [0, +\infty]\times [z, +\infty])} \, \zeta(s, df, dz).\]
\pagebreak[3]

\subsection{Proof of Theorem~\ref{nowinner}}

We have in view of Theorem~\ref{poisson}, for all $\eps >0$ as $t\uparrow\infty$,
\[e^{-\gamma (t-T(t))} \sum_{n=1}^{M(t)} Z_n(t) 
= \int z \, d\Gamma_t(s,f,z)
\geq \int z \1_{(\varepsilon, 1)}(z) \, d\Gamma_t(s,f,z)
\to \int z \1_{(\varepsilon,1)}(z) \, d\mathtt{PPP}_\zeta(s,f,z),\]
where $\mathtt{PPP}_\zeta$ is the counting measure of a Poisson process with intensity 
measure~$\zeta$.
Observe that, for all $m\in\mathbb N$,
\[
\int z \1_{(\varepsilon, 1)}(z) \, d\mathtt{PPP}_\zeta(s,f,z)
\geq \sum_{k=0}^{m(1-\varepsilon)-1} \int_{\varepsilon+\frac k m}^{\varepsilon +\frac{k+1}m} z \, d\mathtt{PPP}_{\zeta}(s,f,z)
\geq \sum_{k=0}^{m(1-\varepsilon)-1} \left(\varepsilon+\sfrac k m\right) P_k,
\]
where $(P_k)_{k\geq 0}$ is a sequence of independent Poisson random variables of  parameters 
$\zeta(\mathbb R \times (0, \infty) \times (\varepsilon+\frac k m, \varepsilon+\frac {k+1} m))$.
As before, we find
\begin{align*}
\zeta\left(\mathbb R \times (0, \infty) \times \left(a,b\right)\right) 
&= \sfrac{\Gamma(\alpha+1)\Gamma(1+\frac{\lambda^{\star}}{\gamma })}{(\lambda^{\star})^{\alpha}} \ \Big( a^{-\frac{\lambda^{\star}}{\gamma }} - b^{-\frac{\lambda^{\star}}{\gamma }} \Big). 
\end{align*}
We abbreviate $\Lambda:=\frac{\Gamma(\alpha+1)\Gamma(1+\frac{\lambda^{\star}}{\gamma })}{(\lambda^{\star})^{\alpha}}$ and get 
\begin{align*}
\mathbb E\left[\sum_{k=0}^{m(1-\varepsilon)-1} \left(\varepsilon+\sfrac k m\right) P_k\right]
& = \Lambda \sum_{k=0}^{m(1-\varepsilon)-1}\left(\varepsilon+\sfrac k m\right) 
\left( \big(\varepsilon+\sfrac k m)^{-\frac{\lambda^\star}{\gamma}} - 
\big(\varepsilon+\sfrac {k+1} m \big)^{-\frac{\lambda^\star}\gamma} \right)\\
& \sim \frac{\Lambda}{m} \sum_{k=0}^{m(1-\varepsilon)-1} \big(\varepsilon + \sfrac{k+1}{m}\big)^{-\frac{\lambda^\star}{\gamma}}
\sim \Lambda \, \int_0^{1-\varepsilon}  \big( \varepsilon+x\big)^{-\frac{\lambda^\star}{\gamma}}\, dx, 
\end{align*}
as $m\to\infty$, by Riemann integration. The right hand side is of order $\log (\nicefrac1\varepsilon)$
if $\lambda^\star=\gamma$, and of order $\varepsilon^{1-\frac{\lambda^\star}{\gamma}}$ if $\lambda^\star>\gamma$. In any case, the expectation goes to infinity, as $\varepsilon\downarrow 0$.
With a similar reasoning we get
\[
\Var \left[\sum_{k=0}^{m(1-\varepsilon)-1} \left(\varepsilon+\frac k m\right) P_k\right]
\leq \frac{\Lambda}{m} \sum_{k=0}^{m(1-\varepsilon)-1} \frac{(\varepsilon+\frac k m)^{2-\frac{\lambda^\star}{\gamma}}}{\varepsilon+\frac{k+1}{m}}
\sim \Lambda \int_0^{1-\varepsilon} \big(\varepsilon+x\big)^{1-\frac{\lambda^\star}{\gamma}}\, dx.
\]
If~$\lambda^\star=\gamma$ the variance is therefore bounded, and otherwise it grows of a slower order than the
square of the expectation. Thus, by Chebyshev's inequality, we get
\[ \lim_{\varepsilon\downarrow 0} \lim_{m\to\infty}
\sum_{k=0}^{m(1-\varepsilon)-1} \left(\varepsilon+\frac k m\right) P_k =\infty,\]
in probability,  and this implies the claimed result.

\section{Open problems}

\subsubsection*{Precise growth of the system}
\ \\[-5mm]
A question that remains open is about the precise growth of $N(t)$ in the condensation phase.
Recall from Remark~1 that if condensation is absent we have $\log N(t)= \lambda^*t+ O(1)$ but we do not have a similarly strong statement in the condensation case. We get a lower bound on the growth by considering the size of the largest family. Under \eqref{as} this gives $\log N(t)- \gamma t + 
\gamma T(t) \to \infty$ with $T(t)\sim \nicefrac{\alpha}{\lambda^*} \log t$. A plausible conjecture would be that in the  condensation case this bound is sharp to logarithmic order, i.e.\ $\log N(t)= \lambda^*t -\alpha \log t +o(\log t).$

\subsubsection*{Shape of the condensate}
\ \\[-5mm]
Our results offer only a partial answer to the question raised in Borgs et al.~\cite{BC07} how the links in the network 
are distributed among the highest  fitnesses present in the system at any given time. The most  important question that remains open here is 
whether the families that together form the condensate have a characteristic collective behaviour prior to condensation.   The work on Kingman's model 
in Dereich and M\"orters~\cite{DM13}, and on a growth model without self-organisation in Dereich~\cite{De14}, suggests that this is indeed the case. 
In particular in the model of~\cite{De14} it is shown that for parameters chosen in the condensation 
regime, the random mass distribution in a suitably shrinking neighbourhood of the maximal fitness value
satisfies a law of large numbers with limiting shape given by a Gamma distribution.
We believe that this is a phenomenon of universal nature and conjecture the same behaviour in our model.
\bigskip

\begin{cj}[Condensation wave]
Under assumption (\ref{as}) we have
$$\lim_{t\to\infty} \Xi_t(1-\sfrac{x}{t},1)=\frac{\omega(\beta,\gamma)}{\Gamma(\alpha+1)}\int_0^x y^{\alpha} e^{-y} \, dy,$$
in probability, i.e.\ the condensation wave has the shape of a \emph{Gamma distribution} with shape parameter~$1+\alpha$.
\end{cj}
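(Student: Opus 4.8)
The plan is to compute the empirical mass of the condensation window $\Xi_t(1-\nicefrac{x}{t},1)$ directly, splitting it into two contributions: the mass coming from families born in a constant-width window around $T(t)$ — which is precisely the window captured by Theorem~\ref{poisson} — and the mass coming from families born outside this window. First I would normalise: by Remark~1, $N(t)=e^{(\gamma+o(1))t}$ in the condensation case (since $\lambda^\star=\gamma$), but the finer statement we need is that $e^{-\gamma t}N(t)$ does \emph{not} converge, which is exactly why the condensate is non-extensive. So the right scaling to isolate the condensate is to write
\[
\Xi_t(1-\sfrac x t,1)=\frac{1}{N(t)}\sum_{n\colon F_n>1-\nicefrac x t} Z_n(t),
\]
and the idea is to show the numerator is of order $e^{\gamma(t-T(t))}\cdot(\text{something }O(1))$ while $N(t)$ is comparable to the \emph{total} condensate mass $e^{\gamma(t-T(t))}\cdot(\text{a divergent, but self-averaging quantity})$, so that the ratio stabilises.

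The key computational step is to use Theorem~\ref{poisson} to evaluate, for the families in the main window, the limit of $e^{-\gamma(t-T(t))}\sum_{n} Z_n(t)\,\1\{(t-\tau_n)(1-F_n)\le x\}$. Working against the limiting intensity $d\zeta(s,f,z)=\alpha f^{\alpha-1}\lambda^\star e^{\lambda^\star s}e^{-ze^{\gamma(s+f)}}e^{\gamma(s+f)}\,ds\,df\,dz$ and recalling $\lambda^\star=\gamma$, one integrates $z$ first ($\int_0^\infty z\,e^{-ze^{\gamma(s+f)}}e^{\gamma(s+f)}\,dz=e^{-\gamma(s+f)}$), leaving $\int\!\!\int \alpha f^{\alpha-1}\gamma e^{\gamma s}e^{-\gamma(s+f)}\,ds\,df = \int \alpha f^{\alpha-1}\gamma e^{-\gamma f}\,df$ times the $s$-integral. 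Imposing the constraint $f\le x$ (since $(t-\tau_n)(1-F_n)\approx f$ and $t-\tau_n\approx t$, the event $F_n>1-\nicefrac x t$ becomes $f< x$ in the limit) gives a factor $\int_0^x \alpha f^{\alpha-1}\gamma e^{-\gamma f}\,df$; the $s$-integral over $(-\infty,\infty)$ against the compensating factors is a fixed constant $c_0$. Comparing $x<\infty$ to $x=\infty$ shows the \emph{fraction} of main-window condensate mass below level $x$ equals
\[
\frac{\int_0^x \alpha f^{\alpha-1}\gamma e^{-\gamma f}\,df}{\int_0^\infty \alpha f^{\alpha-1}\gamma e^{-\gamma f}\,df}
=\frac{1}{\Gamma(\alpha+1)}\int_0^{\gamma x} y^\alpha e^{-y}\,dy
\]
after the substitution $y=\gamma f$ — which, up to the scale factor $\gamma$ and the total mass $\omega(\beta,\gamma)$, is the claimed Gamma$(1+\alpha)$ shape. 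The scale discrepancy is reconciled by noting that the natural spatial variable in Theorem~\ref{poisson} is $f=(t-\tau_n)(1-F_n)\approx(t-T(t))(1-F_n)$ rather than $t(1-F_n)$; tracking this through gives the $\Gamma$-distribution with shape $1+\alpha$ in the variable $x$ as stated.

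The main obstacle — and the reason this is only a conjecture — is exactly the content of the third Remark after Theorem~\ref{poisson}: the constant-width window around $T(t)$ is \emph{not} wide enough to capture all families contributing to the condensate, so one cannot simply read off the total condensate mass from $\Gamma_t$. Closing the argument rigorously requires a separate estimate showing that (i) the families born well before $T(t)$ contribute a vanishing proportion to $\Xi_t(1-\nicefrac x t,1)$ for each fixed $x$ (a truncated second-moment/extreme-value argument in the spirit of Lemma~\ref{lem:born_early}, using Potter's bound via Lemma~\ref{lem:arg_reg_var}), and (ii) more delicately, that the families born in a \emph{slowly growing} window beyond $T(t)$ — those responsible for the divergence of the total condensate mass — are asymptotically distributed, after rescaling, according to the same Gamma profile, so that including them does not distort the limiting shape. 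Step (ii) is where the self-organised criticality genuinely bites: one would need a law-of-large-numbers statement for the mass distribution in a $\nicefrac{x}{t}$-neighbourhood that goes beyond the Poisson limit, presumably via a martingale or renewal-theoretic analysis of the family-size process conditioned on late birth and high fitness, together with a uniform integrability bound to upgrade convergence in probability of the truncated sums to the full sum. This is the step I expect to resist a short proof and is precisely why the authors leave it as Conjecture~7.1 rather than a theorem.
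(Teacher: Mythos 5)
The statement you are asked to prove is not proved in the paper at all: it appears in Section~8 under ``Open problems'' as the \emph{Condensation wave} conjecture, and the authors explicitly leave it open. Your proposal is honest about this --- you flag step~(ii) as the part you cannot close --- so it cannot be counted as a proof. But beyond the acknowledged gap, the part of the argument you do carry out contains two concrete errors. First, the $s$-integral is not ``a fixed constant $c_0$'': after integrating out $z$ one gets $\int_0^\infty z\,e^{-ze^{\gamma(s+f)}}e^{\gamma(s+f)}\,dz=e^{-\gamma(s+f)}$, so with $\lambda^\star=\gamma$ the $s$-integrand reduces to the constant $\gamma e^{-\gamma f}$ and $\int_{-\infty}^{\infty}ds$ diverges. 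The expected $z$-mass of the limiting Poisson process is infinite (this is exactly what drives the proof of Theorem~\ref{nowinner}), so your ``fraction of main-window condensate mass below level $x$'' is an $\infty/\infty$ expression and the normalisation you propose is not defined. Second, even taken formally, the $f$-marginal density $\alpha f^{\alpha-1}\gamma e^{-\gamma f}$ is a Gamma density with shape parameter $\alpha$, not $\alpha+1$; your displayed identity
\begin{equation*}
\frac{\int_0^x \alpha f^{\alpha-1}\gamma e^{-\gamma f}\,df}{\int_0^\infty \alpha f^{\alpha-1}\gamma e^{-\gamma f}\,df}
=\frac{1}{\Gamma(\alpha+1)}\int_0^{\gamma x} y^\alpha e^{-y}\,dy
\end{equation*}
is false (the substitution $y=\gamma f$ gives $\Gamma(\alpha)^{-1}\int_0^{\gamma x}y^{\alpha-1}e^{-y}\,dy$). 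This is consistent with Corollary~\ref{cor:as_laws}$(ii)$, where the fitness of the \emph{largest} family has a Gamma law with shape $\alpha$; the conjectured shape $1+\alpha$ for the whole condensate differs precisely by the extra factor of $y$ that must come from the accumulated contributions of families born outside the constant-width window --- the very contributions your step~(ii) would need to control. So the Poisson limit alone cannot yield the conjectured profile even heuristically without that missing analysis, and the ``scale discrepancy'' you wave away is in fact the substance of the open problem.
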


\subsubsection*{Other classes of bounded fitness distributions}
\ \\[-5mm]
In this paper we have investigated the class of fitness distributions in the maximal domain of attraction of the Weibull distribution, i.e.\ those bounded distributions of regular variation at the maximal fitness value. It would also be interesting to discuss fitness distributions with a faster decay at the maximal fitness value, for example in the maximal domain of attraction of the Gumbel distribution. This includes the interesting example of distributions with
$\log \mu(1-\eps,1) \sim -\eps^{-\gamma}$, for some $\gamma>0$. What is the shape of the condensation wave in this case? Will we also see non-extensive condensation? More generally, can we find bounded fitness distributions where we experience condensation by macroscopic occupancy? This circle of problems is currently under investigation.

\bigskip

{\bf Acknowledgements:} We are grateful to EPSRC for support through grant EP/K016075/1, and SFB~878 as well as the 
\emph{Institut f\"ur mathematische Statistik} at the University of M\"unster
for hosting PM during a sabbatical in 2015/16.   We also thank Achim Klenke for useful discussions on branching processes  with selection and mutation, which were the starting point of this project. Thanks are also due to Anna Senkevich for providing insightful simulations, and in particular the figure in this paper.
Last but not least we would like to thank the organisers of the workshop \emph{Interplay of Probability and Analysis in Applied Mathematics} at the Mathematisches Forschungsinstitut Oberwolfach in~2015, where this work was presented and extensively discussed. 
\pagebreak[3]

\end{document}